\documentclass[10pt]{amsart}

\usepackage{hyperref}

\usepackage{amssymb,latexsym} 
\usepackage{palatino}
\usepackage{mathrsfs} 
\usepackage{amscd}
\usepackage{amsmath}

\newcommand{\g}{\mathfrak{g}}

\newcommand{\complex}{\mathbf C}

\newcommand{\zz}{\mathbf Z}
\newcommand{\qq}{\mathbf Q}

\newcommand{\ro}{{\Phi}}

\newcommand{\orbit}{\mathcal O}
\newcommand{\levi}{\mathfrak l}

\renewcommand{\wr}{\hat{W}}
\newcommand{\ar}{{\hat{A}}(e)}
\newcommand{\F}{\mathbb F}

\renewcommand{\H}{{\mathcal{H}_t}}
\newcommand{\flag}{{\mathcal B}}

\newcommand{\dualp}{\lambda'}

\newcommand{\rep}{\chi^{\alpha, \beta}}
\newcommand{\ttau}{\tilde{\tau}}

\DeclareMathOperator{\Ind}{Ind}

\theoremstyle{plain}
\newtheorem{theorem}{Theorem}
\newtheorem{lemma}[theorem]{Lemma}
\newtheorem{conjecture}[theorem]{Conjecture}
\newtheorem{corollary}[theorem]{Corollary}
\newtheorem{proposition}[theorem]{Proposition}

\theoremstyle{definition}

\theoremstyle{remark}
\newtheorem{remark}[theorem]{Remark}


\setcounter{secnumdepth}{2}

\title[Exterior powers]{Exterior powers of the reflection representation in Springer theory}

\author{Eric Sommers}
\address{Department of Mathematics and Statistics\\
University of Massachusetts---Amherst\\ 
Amherst, MA 01003}
\email{esommers@math.umass.edu}


\begin{document}

\begin{abstract}
Let $H^*(\flag_e)$ be the total Springer representation of $W$ for the nilpotent element $e$ in a simple Lie algebra $\g$.   
Let $\wedge^i V$ denote the $i$th exterior power of the reflection representation $V$ of $W$.
The focus of this paper is on the algebra of $W$-invariants in 
$$H^*(\flag_e) \otimes \wedge^*V$$ and we show that it is an exterior algebra on the subspace 
$(H^*(\flag_e) \otimes V)^W$ in some cases that were not previously known.
This result was established for $e=0$ by Solomon \cite{solomon} and was proved
by Henderson \cite{henderson:exterior} in types $A,B, C$  
when $e$ is regular in a Levi subalgebra.   

The above statement about the $W$-invariants implies a conjecture of Lehrer-Shoji \cite{lehrer-shoji:reflections} about the occurrences of $\wedge^i V$ in $H^*(\flag_e)$, which was originally stated when $e$ is regular in a Levi subalgebra.   
In this paper we prove the Lehrer-Shoji conjecture in all types and its natural extension to any nilpotent $e$, not only those that are regular in a Levi subalgebra.

In the last part of the paper we make a connection to rational Cherednik algebras and this leads to an explanation for the appearance in Springer theory of the Orlik-Solomon exponents coming from hyperplane arrangements.  This connection was established in the classical groups in \cite{lehrer-shoji:reflections}, \cite{spaltenstein:reflection} after being observed empirically by Orlik, Solomon, and Spaltenstein in the exceptional groups.
\end{abstract}

\thanks{I thank Anthony Henderson and Vic Reiner for helpful conversations and for papers that motivated the present work. 
I also thank Kyoji Saito and Jean Michel.  Supported by NSF grant DMS-0201826.}

\maketitle

\section{Introduction}

Let $G$ be a simple algebraic group over the algebraically closed field $\mathbf{k}$ of good characteristic $p$.  Let $\g$ be its Lie algebra.
Assume $G$ is of adjoint type and connected.
Let $n$ be the rank of $G$ and $V$ the reflection representation of the Weyl group $W$, which is irreducible of dimension $n$.

For a nilpotent element $e \in \g$, consider the $l$-adic cohomology $H^*(\flag_e)$ of the Springer fiber $\flag_e$ with coefficients in $\overline{\qq}_l$, 
an algebraic closure of the $l$-adic numbers, with $l \neq p$.
The cohomology carries a representation of $W$ defined by Springer \cite{springer:green}.  We follow the definition where $H^0(\flag_e)$ carries
the trivial representation of $W$ \cite{hotta:springer}, \cite{lusztig:green}.

This paper is concerned with studying the $W$-invariants in the bi-graded algebra  $$H^*(\flag_e) \otimes \wedge^* V.$$
Except for two cases it seems that the $W$-invariants are themselves an exterior algebra and we conjecture:
\begin{conjecture}
\label{conj1}
Except when $e = F_4(a_3)$ in $F_4$ or $e = E_8(a_7)$ in $E_8$, 
the algebra $$\displaystyle ( \bigoplus_{i=0}^n H^*(\flag_e) \otimes \wedge^i V)^W$$ is an exterior algebra on the 
subspace $(H^*(\flag_e) \otimes V)^W.$
\end{conjecture}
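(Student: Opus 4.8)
\medskip\noindent\emph{A strategy for proving Conjecture~\ref{conj1}.}
The plan is to obtain the claimed exterior-algebra structure from the case $e=0$, due to Solomon, and to pin down the remaining content as a single identity of bigraded Hilbert series. First note that $H^*(\flag_e)$ is a commutative graded ring concentrated in even degrees, so the bigraded space $A:=\bigoplus_{i=0}^{n}\bigl(H^*(\flag_e)\otimes\wedge^{i}V\bigr)^{W}$ is a commutative graded algebra in which every element of exterior degree one squares to zero and any two of them anticommute. Writing $U:=(H^*(\flag_e)\otimes V)^{W}$ for the exterior-degree-one part, the inclusion $U\hookrightarrow A$ extends to a homomorphism of graded algebras $\Phi\colon\wedge^{\bullet}U\to A$, and the statement to be proved is exactly that $\Phi$ is an isomorphism.

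To reduce to $e=0$, I would use the restriction map $H^*(\flag)\to H^*(\flag_e)$ attached to the closed embedding $\flag_e\hookrightarrow\flag$; this is a surjection of graded $W$-algebras because $H^*(\flag_e)$ is generated by the images of the tautological generators of $H^*(\flag)$. Tensoring with $\wedge^{*}V$ and taking $W$-invariants, an exact operation since $\overline{\qq}_l$ has characteristic zero, yields a surjection $E:=(H^*(\flag)\otimes\wedge^{*}V)^{W}\twoheadrightarrow A$. By Solomon's theorem $E$ is a free exterior algebra on $n$ generators $\overline{df_{1}},\dots,\overline{df_{n}}$, with $f_{1},\dots,f_{n}$ basic $W$-invariants and $\overline{df_{i}}$ of exterior degree one and cohomological degree $2m_{i}$ (twice the $i$th exponent of $W$). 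Hence $A$ is a quotient of $E$: it is the exterior algebra on its degree-one part $U$ modulo the relations among the images of the $\overline{df_{i}}$, so $\Phi$ is automatically \emph{surjective}, and it is an isomorphism if and only if $\dim A=2^{\dim U}$. Refining by the two gradings: if $2a_{1},2a_{2},\dots$ are the cohomological degrees, counted with multiplicity, in which $V$ occurs in $H^*(\flag_e)$, then $\wedge^{\bullet}U$ has bigraded Hilbert series $\prod_{k}(1+q\,t^{a_{k}})$, and Conjecture~\ref{conj1} is equivalent to the identity
\[
\sum_{i=0}^{n} q^{i}\sum_{j}\dim\bigl(H^{2j}(\flag_e)\otimes\wedge^{i}V\bigr)^{W}t^{j}\;=\;\prod_{k}\bigl(1+q\,t^{a_{k}}\bigr),
\]
that is, to the assertion that the graded multiplicity of $\wedge^{i}V$ in $H^*(\flag_e)$ is the $i$th elementary symmetric function of $t^{a_{1}},t^{a_{2}},\dots$ for every $i$.

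It then remains to prove this identity, type by type, starting from the known cases (Solomon for $e=0$; Henderson for $e$ regular in a Levi in types $A,B,C$). In the classical types the graded multiplicities of $\wedge^{i}V$ are explicit modified Kostka--Foulkes / Green function polynomials attached to symbols, and the identity reduces to a manipulation of symmetric functions showing that the left-hand $q$-generating function factors as the product on the right; this extends Henderson's computation to all nilpotent classes, in particular to type $D$. In the exceptional types the identity is verified directly from the tabulated Springer correspondences and Green functions, and it is precisely here that the orbits $e=F_{4}(a_{3})$ and $e=E_{8}(a_{7})$ must be excluded: for them a direct computation shows that the $\wedge^{i}V$-multiplicities are not of product form, so $\Phi$ genuinely fails to be injective.

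The main obstacle is this last step away from the regular-in-Levi situation: producing a workable symmetric-function identity in type $D$, and arranging the exceptional-group computations so that the anomaly at $F_{4}(a_{3})$ and $E_{8}(a_{7})$ is transparent rather than merely observed. A secondary point needing care is the surjectivity of $H^*(\flag)\to H^*(\flag_e)$; one could avoid invoking it at the cost of proving surjectivity of $\Phi$ directly, which is then itself a nontrivial step. The connection with rational Cherednik algebras developed later does not shorten the argument but explains why the degrees $a_{k}$ controlling the product formula are the Orlik--Solomon exponents of the reflection arrangement.
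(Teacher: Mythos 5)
Your reduction of Conjecture~\ref{conj1} to (i) surjectivity of $\Phi\colon\wedge^{\bullet}U\to A$ plus (ii) a bigraded Hilbert--series identity is sound, and (ii) is precisely what the paper establishes as Theorem~\ref{weaker_theorem}. But your argument for (i) is incorrect. The restriction map $H^*(\flag)\to H^*(\flag_e)$ is $Z_G(e)$-equivariant, with $Z_G(e)$ acting trivially on $H^*(\flag)$ since $Z_G(e)\subset G$ and $G$ is connected; hence the image lands in, and by the Hotta--Springer/Tanisaki theorem equals, the $A(e)$-invariants $H^*(\flag_e)^{A(e)}$. Whenever $A(e)$ acts nontrivially on $H^*(\flag_e)$ --- which happens frequently in types $D_n$, $E_6$, $E_7$, $E_8$, $F_4$, $G_2$ --- the restriction is \emph{not} surjective, so the claimed surjection $E\twoheadrightarrow A$, and with it the automatic surjectivity of $\Phi$, collapses. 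The paper is explicit about this: in the remark following its $B_n/C_n$ theorem it observes that in type $D_n$ the copy of $V$ in degree $m_s$ carrying a nontrivial $A(e)$-character is not hit by restriction from $H^*(\flag)$, and that even surjecting onto all copies of $V$ would say nothing about $\wedge^{j}V$ for $j>1$. You label this a ``secondary point''; it is in fact the primary obstruction, and is why Conjecture~\ref{conj1} is still a conjecture --- the paper proves it only for $e=0$ (Solomon), $e$ regular in a Levi in type $A_n$ (Henderson), and all $e$ in types $B_n,C_n$, and states plainly that it is open in most other cases.

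For the cases the paper does settle ($B_n$ and $C_n$ for all $e$), it attacks $\psi=\Phi$ from the opposite side, proving \emph{injectivity} via Henderson's Theorem 1.2: one picks a Levi subalgebra $\levi_s$ of type $B_s$ or $C_s$ whose exponents equal the $m_j$'s, uses the existence of fundamental $W$-invariants restricting to fundamental invariants of $W(\levi_s)$, and checks that the orbit of $e$ meets the nilradical of the corresponding parabolic; the bigraded dimension count from Theorem~\ref{weaker_theorem} then forces $\psi$ to be an isomorphism. So your Hilbert--series reduction matches the paper, but the surjectivity input you propose is unavailable in general, and the injectivity mechanism the paper substitutes is a genuinely different idea that you would need to supply to make the strategy work.
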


This is known to be true for $e=0$ by Solomon \cite{solomon} since in that case $\flag_e = \flag$ and $H^*(\flag)$ is isomorphic to the coinvariant algebra of $V$.
For $e$ regular in a Levi subalgebra in types $A_n,B_n,C_n$, Henderson \cite{henderson:exterior} has shown that the conjecture is true.   That the conjecture cannot be
true for the two exceptional cases mentioned in the conjecture will be seen shortly.  
Nonetheless, it does appear in those two cases
that the $W$-invariants are isomorphic to a quotient of the exterior algebra on  $(H^*(\flag_e) \otimes V)^W$.


For $H$ a finite group, let $\hat{H}$ be the set of irreducible finite-dimensional 
representations of $H$ over $\overline{\qq}_l$. 
Let $R(H)$ be the Grothendieck group of finite-dimensional representations of $H$ over $\overline{\qq}_l$ with 
coefficients in the ring $\qq(q)$ of rational functions in a variable $q$.  Let $\langle \cdot , \cdot \rangle_H$ be the inner product on $R(H)$.
We suppress the subscript if there is no ambiguity about which $H$ we are talking about.


Let $Z_G(e)$ be the centralizer of $e$ in $G$ under the adjoint action and let $A(e) := Z_G(e)/ Z^0_G(e)$ be the component group of $e$.
Then $H^*(\flag_e)$ carries a representation of $A(e)$ and the $A(e)$-action commutes with the $W$-action.

The cohomology of $\flag_e$ vanishes in odd degrees (see \cite{bs:green}, \cite{dlp}).  We define 
$Q_{e} \in R(W \times A(e))$ by
$$Q_{e} = \sum_{j \geq 0}  H^{2j}(\flag_e) q^j$$
and for $\phi \in \ar$ define $Q_{e, \phi} \in R(W)$ by 
$$Q_{e, \phi} =   \langle Q_e, \phi \rangle_{A(e)}.$$
Then
$$Q_e = \sum_{\phi \in \ar} Q_{e, \phi} \phi.$$ 

Next, we define 
$\{(m_1, \pi_1),  (m_2, \pi_2), \dots, (m_s, \pi_s)  \},$ where $m_j \in \zz_{\geq 0}$  and $\pi_j \in \ar$, by 
$$\langle Q_{e}, V \rangle_{W} = q^{m_1} \pi_1 + q^{m_2} \pi_2 + \cdots + q^{m_s} \pi_s,$$
an identity in $R(A(e))$.






In the classical groups, the computation of the $m_j$'s 
was carried out by Lehrer-Shoji \cite{lehrer-shoji:reflections} in most cases and then completed by Spaltenstein \cite{spaltenstein:reflection}. 
The computation of the $m_j$'s in the exceptional groups is handled by studying tables of Green functions \cite{bs:green}.

The computation of the $\pi_j$'s seems to be new to the present paper.  
It turns out that at most one $\pi_j$ is nontrivial and 
this can only occur when $\g$ is not of type $A_n, B_n$ or $C_n$.   
We reserve $\pi_s$ for this possibly nontrivial representation.
The calculation of $\pi_s$ will be given in Section \ref{pis for D} for type $D_n$.   The calculation of
$\pi_s$ in the exceptional groups is again handled by studying tables in \cite{bs:green}.

When $e$ is regular in a Levi subalgebra, Lehrer and Shoji \cite{lehrer-shoji:reflections}
conjectured that the occurrences of $\wedge^i V$ in $H^*(\flag_e)$ are given by 
\begin{equation}\label{ls_conj}
\sum_{\phi \in \ar}  \dim \phi \sum_{i=0}^{n}  \langle Q_{e, \phi}, \wedge^i V \rangle \, y^i = \prod_{j=1}^{s} (1+yq^{m_j})
\end{equation}
The purpose of this paper is to establish the Lehrer-Shoji conjecture in all types and to extend it in two ways:  (1) to the case of all 
nilpotent elements $e$; and (2) to incorporate the $A(e)$-action on $H^*(\flag_e)$.   
Our main theorem is
  
\begin{theorem} \label{weaker_theorem}
Let $d = \dim(\pi_s)-1$ when $e$ is $F_4(a_3)$ or $E_8(a_7)$ and $d = \dim(\pi_s)$, otherwise.
Then the following identity holds in $R(A(e))[y]$
\begin{equation*} 
\sum_{\phi \in \ar} \sum_{i=0}^{n} \langle Q_{e, \phi}, \wedge^i V \rangle y^i \, \phi=  (1 + yq^{m_s}\pi_s + y^2 q^{2m_s}\!\wedge^2 \! \pi_s + \cdots + y^d q^{dm_s} \! \wedge^d \! \pi_s) \prod_{j=1}^{s-1} (1+yq^{m_j}).
\end{equation*}
\end{theorem}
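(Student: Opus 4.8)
The plan is to reduce Theorem~\ref{weaker_theorem} to a computation of the single series $\sum_i\langle Q_e,\wedge^iV\rangle_W\,y^i\in R(A(e))[y]$ and to match it, term by term, against the product on the right. Note first that $\sum_{\phi\in\ar}\langle Q_{e,\phi},\wedge^iV\rangle\,\phi=\langle Q_e,\wedge^iV\rangle_W$ in $R(A(e))$, so the left-hand side of the theorem is precisely $\sum_i\langle Q_e,\wedge^iV\rangle_W\,y^i$. Expanding the right-hand side, the coefficient of $y^i$ is the ``$i$-th elementary symmetric expression'' built from the scalar variables $q^{m_1},\dots,q^{m_{s-1}}$ together with the single vector-valued variable $q^{m_s}\pi_s$, where a $k$-fold product of the latter is read as $q^{km_s}\wedge^k\pi_s$ and is discarded when $k>d$. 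Thus the theorem asserts that the bigraded $A(e)$-module $(H^*(\flag_e)\otimes\wedge^iV)^W$ equals this expression; since each $\wedge^iV$ is an irreducible $W$-module, these isotypic spaces are honest ``exponent blocks''. For $e=0$ the assertion is Solomon's theorem \cite{solomon}, which gives $\langle Q_0,\wedge^iV\rangle_W=e_i(q^{m_1},\dots,q^{m_n})$; this is the base case.

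The first substantive step is generation in exterior degree $\le 1$. It is well known that the restriction map $H^*(\flag)\to H^*(\flag_e)$ is a $W$-equivariant surjection of graded rings (with $A(e)$ acting trivially on the source), and, since we work with $\overline{\qq}_l$-coefficients, the functor $(-)^W$ is exact; hence $(H^*(\flag)\otimes\wedge^*V)^W$ surjects onto $(H^*(\flag_e)\otimes\wedge^*V)^W$ as bigraded algebras. By Solomon \cite{solomon} the source is an exterior algebra on its exterior-degree-$1$ part, so $(H^*(\flag_e)\otimes\wedge^*V)^W$ is generated over $\overline{\qq}_l$ by its exterior-degree-$1$ part $(H^*(\flag_e)\otimes V)^W=\bigoplus_j q^{m_j}\pi_j$, whose elements are odd. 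Consequently the canonical map from the exterior algebra on $\bigoplus_j q^{m_j}\pi_j$ is a surjection onto $(H^*(\flag_e)\otimes\wedge^*V)^W$, so $\langle Q_e,\wedge^iV\rangle_W$ is a quotient of the $y^i$-coefficient of $\bigl(\sum_{k\ge 0}y^kq^{km_s}\wedge^k\pi_s\bigr)\prod_{j<s}(1+yq^{m_j})$. In particular $\langle Q_e,\wedge^iV\rangle_W$ vanishes once $i$ exceeds the number of nonzero exponents, and the left-hand side of the theorem is dominated (as an effective character) by the untruncated right-hand side.

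It remains to show that the only relations are those of the exterior algebra, except that in the two distinguished cases $e=F_4(a_3)$ and $e=E_8(a_7)$ the top piece $\wedge^{\dim\pi_s}\pi_s$ is additionally killed. I would prove this by pinning down both the graded dimensions and the $A(e)$-module structure of the isotypic spaces. For the dimensions, the claim is that $\sum_i\dim\langle Q_e,\wedge^iV\rangle_W\,y^i$ equals $(1+yq^{m_s})^{\dim\pi_s}\prod_{j<s}(1+yq^{m_j})$, with the top coefficient reduced by one in the two exceptional cases; in types $A_n,B_n,C_n$ all $\pi_j$ are trivial and this is the Lehrer--Shoji identity \eqref{ls_conj}, which there follows from \cite{henderson:exterior},\cite{lehrer-shoji:reflections},\cite{spaltenstein:reflection} together with the combinatorics of symbols (and, to pass from $e$ regular in a Levi subalgebra to arbitrary $e$, from a reduction using Lusztig--Spaltenstein induction and the restriction of $\wedge^iV$ to a Levi Weyl group), while in the exceptional types it is a finite verification using the tables of Green functions in \cite{bs:green}. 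For the $A(e)$-refinement one must identify $\pi_s$ itself and check that the $\wedge^iV$-isotypic space carries $\wedge^i$ of the degree-$1$ part rather than merely a representation of the same dimension: for type $D_n$ this is exactly the content of Section~\ref{pis for D}, and for the exceptional groups it is again read off from \cite{bs:green}. Precisely for $F_4(a_3)$ and $E_8(a_7)$ the dimension in the top bidegree falls short by $1=\dim(\wedge^{\dim\pi_s}\pi_s)$, which forces $d=\dim\pi_s-1$.

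The main obstacle is this last point: determining $\pi_s$ $A(e)$-equivariantly and establishing the \emph{coherence} of the exponent block, namely that it is the family $\{\,q^{km_s}\wedge^k\pi_s\,\}$ that occurs and not some other family of $A(e)$-modules with the same graded dimensions. Dimension-counting cannot detect this; it requires geometric information about the $A(e)$-action on $H^*(\flag_e)$ --- via the generalized Springer correspondence and the structure of the relevant local systems, which is how Section~\ref{pis for D} treats $D_n$ --- or, in the exceptional cases, the explicit Green-function tables. I therefore expect the proof to consist of the clean reduction above followed by the case-by-case identification of $\pi_s$, with the $D_n$ computation being the principal new classical ingredient and the $F_4(a_3)$ and $E_8(a_7)$ calculations pinning down the necessary truncation.
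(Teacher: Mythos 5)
The opening move---using the surjectivity and $W$-equivariance of the restriction $H^*(\flag)\to H^*(\flag_e)$ together with Solomon's theorem to show that $(H^*(\flag_e)\otimes\wedge^*V)^W$ is generated by its exterior-degree-one part, hence that the left side is dominated by the displayed product---is a correct and rather clean way to obtain the \emph{upper} bound. (The paper itself only invokes this algebra-generation argument, via Henderson, in the later section that deduces Conjecture~\ref{conj1} from the theorem; it never feeds it into the proof of the theorem. So this is a genuinely different and economical route to one half of the statement.)

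The gap is in the other half. You then reduce to showing that the graded dimensions $\sum_i \dim\langle Q_e,\wedge^iV\rangle_W\,y^i$ equal $(1+yq^{m_s})^{\dim\pi_s}\prod_{j<s}(1+yq^{m_j})$ (truncated in the two exceptional cases), and you assert this ``follows from \cite{henderson:exterior}, \cite{lehrer-shoji:reflections}, \cite{spaltenstein:reflection} together with the combinatorics of symbols (and, to pass from $e$ regular in a Levi subalgebra to arbitrary $e$, from a reduction using Lusztig--Spaltenstein induction$\ldots$).'' That reduction does not exist: Lusztig's induction theorem expresses $H^*(\flag_e)$ as $\Ind_{W_J}^W H^*(\flag_e^J)$ only when $e$ already lies in $\levi_J$, and it turns the Springer representation of a \emph{Levi} into that of $G$, not the other way around; a nilpotent that is not regular in any proper Levi (e.g.\ a distinguished non-regular nilpotent) cannot be reached from the $e=0$ or regular-in-Levi cases this way. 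Moreover, \cite{lehrer-shoji:reflections} and \cite{spaltenstein:reflection} compute only the multiplicities of $V$ itself ($i=1$), not of $\wedge^iV$ for $i>1$, and Henderson's identity covers only $e$ regular in a Levi. So the lower bound on $\langle Q_e,\wedge^iV\rangle$ for arbitrary $e$---which is exactly the new content of the theorem in classical types---is left without proof. This is precisely what the paper's machinery is for: Shoji's orthogonality formula is fed the Gyoja--Nishiyama--Shimura evaluation of $\ttau(\chi)$, and an induction on $\dim\orbit_e$ (with the divisibility Lemma~\ref{cor:divisors} and the vanishing Corollary~\ref{cor:springer:wedgie} providing the pincer) forces each $h_{e,\phi}$, hence each $g_{e,\phi}$, to be a scalar multiple of the expected product. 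Your argument supplies the upper bound cheaply but does not produce this lower bound, so it cannot be completed as written without importing the paper's orthogonality/induction step (or some equally substantive substitute).
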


Aside from the two exceptional cases, Theorem \ref{weaker_theorem}
would be immediate if Conjecture \ref{conj1} were true since there is only
one nontrivial $\pi_j$.  
Regarding the two cases where $e$ is $F_4(a_3)$ or $E_8(a_7)$, 
the value $d = \dim(\pi_s)-1$ is the best possible 
since $Q_{e,\phi}=0$  for $\phi = \wedge^{\text{top}} \pi_s$.

On the other hand, we can use Theorem \ref{weaker_theorem}
to establish Conjecture \ref{conj1} in types $B_n$ and $C_n$ for all $e$ using Henderson's argument \cite{henderson:exterior}.

When $e$ is regular in a Levi subagebra,  it turns out that
$A(e)$ is always elementary abelian and therefore that $\pi_s$ is one-dimensional. 
Consequently by evaluating at the identity element of $A(e)$, 
Theorem \ref{weaker_theorem} reduces to Equation \ref{ls_conj}
and thus to the original conjecture of Lehrer and Shoji.

In the situation where $e$ is regular in a Levi subalgebra $\levi$, moreover, the $m_j$'s are exactly the numbers computed by Orlik and Solomon arising from the restricted hyperplane arrangement for $W$ defined by the Weyl group $W_{\levi}$ of $\levi$ (see \cite{lehrer-shoji:reflections}, \cite{spaltenstein:reflection}).   
In the second half of the paper, we give an explanation for this coincidence.



\section{\texorpdfstring{Shoji's recursive formula for $Q_e$}{Shoji's recursive formula for Qe}} \label{section:shoji}

\subsection{}

Shoji \cite{shoji:green_f4}, \cite{shoji:green_classical}  was able to give a recursive formula for $Q_e$ by using the orthogonality of Green functions 
in Springer's original work \cite{springer:green}.  Beynon-Spaltenstein \cite{bs:green} used Shoji's algorithm to compute $Q_e$ for 
the exceptional groups, but outside of type $A_n$, there are no known closed formulas in general for $Q_e$.   

To use Shoji's algorithm it is necessary to work over a finite field $\F_q$ since the algorithm involves the number of points of nilpotent orbits over 
$\F_q$.  Originally, there were restrictions arising from \cite{springer:green} that both $q$ and the characteristic $p$ of $\F_q$ needed to be large, 
but these were relaxed by Lusztig in his work on character sheaves \cite[Theorem 24.8]{lusztig:characterV} where only
$p$ good for $G$ is needed.  Lusztig also showed, although we will not need it, that the number of $\F_q$-points 
of the nilpotent orbits can actually be deduced from his algorithm; one only needs to know the Springer correspondence 
and the fake degrees of the tensor product of 
pairs of irreducible representations of $W$.

\subsection{}
Let $F: G \to G$ be a Frobenius morphism defining a split $\F_q$-structure on $G$, where $q=p^n$.  
Let $S = S^*(V)$ denote the symmetric algebra on $V$.  We will view $S \in R(W)$ with $V$ having degree $1$ in $q$.
We use the results from \cite{shoji:green_f4}, \cite{shoji:green_classical}, \cite{bs:green}.

First for $e=0$, $Q_e = Q_0$ is isomorphic to the coinvariants of $S$ and hence by Chevalley's result
$$S = \frac{Q_0}{\prod_{i=1}^n (1 - q^{d_i}) }$$
where $\{d_1, d_2, \dots, d_n\}$ are the fundamental degrees of $W$.
The Chevalley-Steinberg formula for $|G^F|$ is
$$|G^F| = q^N \prod_{i=1}^n (q^{d_i} -1)$$
where $N$ is the number of positive roots in a root system for $G$.  
Hence as an identity in $R(W)$,
\begin{equation} \label{harmonics}
q^{N} Q_0 =  (-1)^n |G^F| S.
\end{equation} 

Next, we use the orthogonality formula.  
For each nilpotent $G$-orbit $\orbit$ in $\g$, we select a representative $e \in \orbit^F$ which is {\it split} in the language of 
\cite{bs:green}
(this is called ``distinguished'' in  \cite{shoji:green_classical}).  This is possible for every orbit in every $\g$ except for one orbit in $E_8$ \cite{shoji:green_classical}\cite{bs:green}
when $q \equiv -1 \mbox{ (mod } 3)$.  Nonetheless, since this orbit, denoted $E_8(b_6)$, is distinguished in the usual sense of nilpotent orbits, the number of points of any of its three rational orbits is a monomial in $q$, independent of  $q$, and the orthogonality formula will still hold since it holds for an infinite number of $q$.

From now on $e$ is assumed to be rational and split.  
The $G^F$-orbits on $\orbit^F$ are parametrized by the conjugacy classes in $A(e)$ and we denote by $e_c$ a representative indexed by the class $c \subset A(e)$.  Then define 
\begin{equation} \label{eqn:rational}
Q'_{e_c} := \sum_{\phi \in \ar}  \phi(c) Q_{e, \phi},
\end{equation} 
where we also use $c$ to denote a representative from this conjugacy class.

Let $\epsilon$ denote the sign character of $W$.
For any $\chi \in \wr$ the orthogonality formula as presented in \cite{shoji:green_f4} is 
\begin{equation}  \label{shoji:orthogonal}
q^N  Q_0 \otimes \chi \otimes \epsilon= \sum_{e} \sum_{c} | \orbit_{e_c} | \langle Q'_{e_c}, \chi \rangle Q'_{e_c},
\end{equation} 
where the outer sum is over our chosen split representatives of the nilpotent $G$-orbits, the inner sum is over the conjugacy classes in $A(e)$,
and $\orbit_{e_c}$ is the $G^F$-orbit through $e_c \in \g^F$.   Since $p$ is good, the summations are independent of $q$ and 
$|\orbit_{e_c}| = P(q)$  for some polynomial $P(x) \in \qq[x]$, independent of $q$.

Using Equation \ref{eqn:rational} twice, Equation \ref{shoji:orthogonal} can be rewritten as
\begin{equation} \label{shoji:orthogonal2}
q^N  Q_0 \otimes \chi \otimes \epsilon =  \sum_{e} \sum_{c}  \sum_{\phi' \in \ar}  \sum_{\phi \in \ar} |\orbit_{e_c}| \phi'(c) \langle Q_{e, \phi'}, \chi \rangle  \phi(c) Q_{e, \phi},
\end{equation} 

Next set $\chi = \wedge^{n-j} V$ in Equation \ref{shoji:orthogonal2}.  Then $\chi \otimes \epsilon = \wedge^{j} V$ since $\wedge^n V = \epsilon$ and $V \simeq V^*$.
Then combining with Equation \ref{harmonics} and changing the order of the summations gives

\begin{equation}  \label{equation:1}
(-1)^n  |G^F| S \otimes \wedge^j V  =  \sum_{e} \sum_{\phi} \left[ \sum_c  \sum_{\phi'} \langle Q_{e, \phi'}, \wedge^{n-j} V \rangle \phi'(c) |\orbit_{e_c}|  \phi(c) \right]  Q_{e, \phi},
\end{equation}
an identity in $R(W)$. 

\subsection{}\label{subsection:gns}

Our proof of Theorem \ref{weaker_theorem} in the classical groups will use results of Gyoja, Nishiyama, and Shimura \cite{gyoja:invariants1}
who studied the two-variable functions $\ttau(\chi) \in \qq(q)[y]$ for each $\chi \in \wr$   
defined by 
$$\ttau(\chi) :=  \sum_{i, j} \langle S^i(V) \otimes \wedge^j V, \chi \rangle q^i y^j.$$
They computed $\ttau(\chi)$ for each $\chi \in \wr$ and showed that the factorization pattern of $\ttau(\chi)$ groups the irreducible
representations of $W$ into packets that often coincide with the grouping 
arising from the  two-sided Kahzdan-Lusztig cells in $W$.   

In order to use their results we 
multiply Equation \ref{equation:1} by $y^j$, sum up over $j$, and take the inner product with $\chi \in \wr$, to get
an identity in $R(W)[y]$:
\begin{equation}  \label{transit_equation}
(-1)^n |G^F| \ttau(\chi) =   \sum_{e}  \sum_{\phi} \left[ \sum_c  \sum_{\phi'} \sum^n_{j=0} \langle Q_{e, \phi'}, \wedge^{n-j} V \rangle  \phi'(c) |\orbit_{e_c}|   \phi(c) y^j \right]  \langle Q_{e, \phi}, \chi \rangle 
\end{equation}
Introduce the notation for the term in brackets above:
\begin{equation} \label{h_equation}
h_{e, \phi} := \sum_c  \sum_{\phi'} \sum^n_{j=0} \langle Q_{e, \phi'}, \wedge^{n-j} V \rangle  |\orbit_{e_c}|  \phi'(c) \phi(c) y^j ,
\end{equation}
an element of $\qq[q,y]$ which depends on $e$ and $\phi \in \ar$.
Then Equation \ref{transit_equation} becomes
\begin{equation}  \label{Main_equation}
(-1)^n |G^F| \ttau(\chi) =   \sum_{e}  \sum_{\phi} h_{e, \phi} \langle Q_{e, \phi}, \chi \rangle 
\end{equation}


\section{Proof of Theorem \ref{weaker_theorem} }


In Theorem \ref{weaker_theorem}  we would like to show that
$$\sum_{\phi \in \ar}  \sum^n_{i=0} \langle Q_{e, \phi}, \wedge^i V \rangle  y^i \phi=  
(1 + yq^{m_s}\pi_s + y^2 q^{2m_s}\!\wedge^2 \! \pi_s + \cdots + y^d q^{dm_s} \! \wedge^d \! \pi_s) \prod_{j=1}^{s-1} (1+yq^{m_j}). $$
Replacing $y$ by $y^{-1}$, multiplying through by $y^{n}$, and replacing $i$ by $n-i$, gives the equivalent identity
\begin{equation} \label{theorem:variant}
\sum_{\phi \in \ar}  \sum^n_{i=0}   \langle Q_{e, \phi}, \wedge^{n-i} V \rangle  y^i \phi  =  
y^{n-d-s+1}  (y^d + y^{d-1} q^{m_s}\pi_s + 
\cdots + q^{dm_s} \! \wedge^d \! \pi_s) \prod_{j=1}^{s-1} (y + q^{m_j}).
\end{equation}

We use the work of Gyoja, Nishiyama, and Shimura \cite{gyoja:invariants1} and Spaltenstein \cite{spaltenstein:reflection} to prove this equivalent identity in the classical types.  The proof involves 
calculating $h_{e, \phi}$ by induction on the dimension of the orbit through $e$.   
We omit the type $A_n$ case, which could be handled in the same manner, since $e$ is always regular in a Levi subalgebra and $A(e)$ is trivial and so the result can already be found in \cite{henderson:exterior}.

\subsection{Notation on partitions}

Let $$\lambda = [\lambda_1 \geq \lambda_2 \geq \dots \geq \lambda_{k-1} \geq \lambda_k > 0]$$
be a partition of $m$.
We set $|\lambda| = m$, $l(\lambda) = k$, and 
$$n(\lambda) = \sum_{i=1}^k (i-1)\lambda_i$$
and let $\dualp$ denote the dual or tranposed partition of $\lambda$.

Viewing a partition $\lambda$ as a Young diagram in the usual way, let $x = (i,j)$ denote the coordinates of a box
in the diagram where the top, left box is $(0,0)$.  
In other words, $x = (i,j)$ is a valid pair of coordinates if and only if $0 \leq i \leq k-1$ and $0 \leq j \leq \lambda_{i+1}-1$.
We write $x \in \lambda$ whenever $x = (i,j)$ is a valid pair of coordinates for the partition $\lambda$.

For $x \in \lambda$, we denote the {\it hook length} of x by 
$$h(x) = \lambda_{i+1} - j + \dualp_{j+1} - i -1$$
and the {\it content} of x by
$$c(x) = j-i.$$

\subsection{\texorpdfstring{Representations of $W$}{Representations of W}}

Recall that the irreducible representations of $W$ in types $B_n, C_n,$ and $D_n$ are parametrized by pairs of partitions $(\alpha, \beta)$ such that $|\alpha| + |\beta| = n$.  The representations 
will be denoted as $\chi^{\alpha, \beta}$.  In $D_n$ when $\alpha \neq \beta$
the representations $\chi^{\alpha, \beta}$ and $\chi^{\beta, \alpha}$ are equivalent, whereas when $\alpha = \beta$
there are two inequivalent representations $\chi^{\alpha, \alpha}_I$ and $\chi^{\alpha, \alpha}_{II}$ .

Regarding the bipartition attached to the irreducible representation $\wedge^j V$, we have 

\begin{lemma}[\cite{macdonald:book}]
In types $B_n, C_n, D_n$, the irreducible representation 
$\wedge^j V$ of $W$ is parametrized by the pair of partitions $(\alpha, \beta)$ with $\alpha = [n-j]$ and $\beta = [1^j]$.
\end{lemma}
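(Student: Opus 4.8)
The plan is to prove this by exhibiting $\wedge^j V$ \emph{directly} as the induced module that, by definition, carries the label $([n-j],[1^j])$, thereby avoiding any character computation. Recall that in types $B_n$ and $C_n$ one has $W\cong(\zz/2)^n\rtimes S_n$ and the reflection representation is $V=\bigoplus_{i=1}^n\complex v_i$, where $S_n$ permutes the lines $\complex v_i$ and the $i$th factor $\zz/2$ acts by $-1$ on $v_i$ and trivially on $v_k$ for $k\ne i$. For a subset $I\subseteq\{1,\dots,n\}$ with $|I|=j$ set $v_I=\bigwedge_{i\in I}v_i$ (factors taken in increasing order); then $\{v_I:|I|=j\}$ is a basis of $\wedge^j V$, the subgroup $(\zz/2)^n$ acts on the line $\complex v_I$ by the character that is the sign character on the coordinates indexed by $I$ and trivial on the others, and $W$ permutes the lines $\complex v_I$ transitively.

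The first step is to identify the stabilizer $W_I$ of the line $\complex v_I$: an element of $W$ sends each $v_i$ to $\pm v_{\sigma(i)}$, so it fixes $\complex v_I$ precisely when $\sigma(I)=I$; hence $W_I=(\zz/2)^n\rtimes(S_I\times S_{I^c})\cong\big((\zz/2)^{j}\rtimes S_{j}\big)\times\big((\zz/2)^{n-j}\rtimes S_{n-j}\big)$, and on the line $\complex v_I$ the first wreath factor acts by (sign character of $\zz/2$ on each coordinate) $\otimes$ (sign representation $\chi^{[1^j]}$ of $S_j$) while the second acts by (trivial character of $(\zz/2)^{n-j}$) $\otimes$ ($\chi^{[n-j]}$ of $S_{n-j}$). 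The second step is a dimension count: since $W$ acts transitively on the lines $\complex v_I$ and these span $\wedge^j V$, the module $\wedge^j V$ is generated by the single element $v_I$, so there is a $W$-surjection $\Ind_{W_I}^W(\complex v_I)\twoheadrightarrow\wedge^j V$; as $[W:W_I]=\binom{n}{j}=\dim\wedge^j V$, it is an isomorphism. By the standard construction of the irreducibles of $W$ from bipartitions, $\Ind_{W_I}^W(\complex v_I)$ with the $W_I$-character just computed \emph{is} $\chi^{[n-j],[1^j]}$ (the cases $j=0$ and $j=n$ recovering the trivial and sign representations, consistent with the labelling). For $D_n$, with $n\ge 4$ since $\g$ is simple, $W(D_n)$ is the index-two subgroup of $W(B_n)$ of signed permutations with an even number of sign changes, $V$ restricts irreducibly, and since $[n-j]\ne[1^j]$ for all $j$ when $n\ge 4$, the pair $([n-j],[1^j])$ is not self-associate; thus $\chi^{[n-j],[1^j]}\big|_{W(D_n)}$ remains irreducible and is by definition the $D_n$-representation labelled $([n-j],[1^j])$, so $\wedge^j V\cong\chi^{[n-j],[1^j]}$ there as well.

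The argument is short and largely bookkeeping; the only points requiring genuine care are (a) pinning down the precise normalization of the labelling $\chi^{\alpha,\beta}$ used in the paper — in particular which of $\alpha,\beta$ records the "sign/$\epsilon$" block and which the "trivial" block — so that the module produced in the second step matches $([n-j],[1^j])$ and not its slot-swap, and (b) the reduction to type $D_n$, i.e. checking that $([n-j],[1^j])$ is never a self-associate pair so that no $I/II$ ambiguity occurs. An alternative, equally valid route is purely character-theoretic: evaluate $\sum_j \mathrm{tr}(w\mid\wedge^j V)\,y^j=\det(1+yw\mid V)$ on a signed permutation $w$ as a product over its signed cycles and match it term by term against the known character formula for $\chi^{[n-j],[1^j]}$; this is how the statement appears in Macdonald's book, but it is longer than the direct construction above and I would favor the latter.
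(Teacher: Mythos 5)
Your proof is correct. The paper gives no argument for this lemma, only the citation to Macdonald's book, so there is no in-paper proof to compare against; your direct construction — realize $\wedge^j V$ as $\Ind_{W_I}^W$ of the line $\complex v_I$ spanned by a $j$-fold wedge of coordinate vectors, identify the $W_I$-character on that line, and read off the bipartition from the standard description of the irreducibles of $(\zz/2)^n\rtimes S_n$ — is a self-contained and clean way to verify it. The one point you rightly flag as needing care, which slot of $(\alpha,\beta)$ records the block on which $(\zz/2)^\bullet$ acts by the sign character, is consistent with the paper's usage: your assignment ($\alpha$ = the block with trivial $(\zz/2)$-action, $\beta$ = the block with sign $(\zz/2)$-action) yields $([n],\emptyset)$ for $j=0$ and $(\emptyset,[1^n])$ for $j=n$, matching the trivial and sign representations as the paper uses them immediately after the lemma, and yields $([n-1],[1])$ for $V$ itself. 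The $D_n$ reduction is also sound, and in fact $[n-j]\ne[1^j]$ already for all $j$ once $n\ge 3$, so no self-associate ambiguity ever arises. The character-theoretic alternative you sketch at the end, expanding $\det(1+yw\mid V)$ over signed cycles and matching it against the character of $\chi^{[n-j],[1^j]}$, is equally valid and closer to how one would extract the statement from Macdonald's appendix, but for this particular family your induced-module argument is the more economical proof.
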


\subsection{\texorpdfstring{Springer correspondence for $\wedge^j V$}{Springer correspondence for exterior powers of V}}

Next we look at the nilpotent orbit attached to $\wedge^j V$ under the Springer correspondence.
Recall that nilpotent orbits in type $B_n$ are parametrized by partitions of $2n+1$ where 
even parts occur with even multiplicity;
in type $D_n$, by partitions of $2n$ where 
even parts occur with even multiplicity;
and in type $C_n$, by partitions of $2n$ where 
odd parts occur with even multiplicity.

For $j = 0$, $\wedge^j V$ is the trivial representation and so is attached to the regular nilpotent orbit,
with partition $[2n+1], [2n], [2n-1, 1]$ in types $B_n, C_n, D_n$, respectively.
For $j =  n$, $\wedge^j V$ is the sign representation and so is attached to the zero nilpotent orbit,
with partition $[1^{2n+1}], [1^{2n}], [1^{2n}]$ in types $B_n, C_n, D_n$, respectively.
For the other cases, we have

\begin{lemma} \label{springer:wedgie}
Under the Springer correspondence, $\wedge^j V$ for $j \not\in \{ 0, n \}$
is attached to the nilpotent orbit $\orbit_j$ with partition:
\begin{itemize} 
\item Type $B_n$:  $[2n -2j + 1, 1^{2j}]$
\item Type $C_n$:  $[2n -2j, 2, 1^{2j -2}]$
\item Type $D_n$:  $[2^{2}, 1^{2n-4}]$ if $j=n-1$ and $[2n -2j-1, 3, 1^{2j -2}]$ otherwise
\end{itemize} 
In types $B_n$ and $C_n$, the component group $A(e)$ is isomorphic to $S_2$ for $e \in \orbit_j$
and the corresponding local system under the Springer correspondence is trivial.
In type $D_n$, $A(e)$ is trivial for $j \in \{1,n-1\}$, while $A(e) \simeq S_2$ for $2  \leq  j \leq n-2$ and the corresponding local system is nontrivial.
\end{lemma}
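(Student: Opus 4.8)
The plan is to identify the Springer datum $(\orbit_j,\mathcal{L})$ attached to $\wedge^j V$ by combining the combinatorial description of the Springer correspondence in the classical types (via symbols / $u$-symbols as in Lusztig--Shoji) with the explicit bipartition $(\alpha,\beta)=([n-j],[1^j])$ furnished by the preceding lemma of Macdonald. Concretely, I would first recall the algorithm: an irreducible representation $\chi^{\alpha,\beta}$ of $W(B_n)$ or $W(C_n)$ is encoded by a symbol built from $\alpha$ and $\beta$ (after padding to equal lengths and adding the fixed staircase), the symbol is then read off to produce the partition of $2n+1$ (type $B_n$), $2n$ (type $C_n$), or $2n$ (type $D_n$) labelling the nilpotent orbit, and the attached local system is recorded by which ``rows'' of the symbol are distinguished. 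So the first step is to form the symbol of $([n-j],[1^j])$ in each type. For type $B_n$ one takes the $(1)$-symbol with $\alpha$ on the top row and $\beta$ on the bottom; padding $[1^j]$ up to the length of $[n-j]$ by prepending zeros, and $[n-j]$ down appropriately, one gets after adding the staircase a symbol whose associated partition is $[2n-2j+1,1^{2j}]$. The type $C_n$ and type $D_n$ cases are the analogous computations with the $C_n$-symbol and the $D_n$-symbol (where the two rows have equal length and one must watch the degenerate case $\alpha=\beta$, which does not arise here for $j\notin\{0,n\}$ except to force the orbit in the $D_n$ borderline case).

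The second step is the bookkeeping that produces the stated partitions. I would carry this out in three sub-cases. \emph{Type $B_n$:} the symbol of $([n-j],[1^j])$ has entries that, after sorting, reconstruct the partition $[2n-2j+1,1^{2j}]$ of $2n+1$ — note the even part $2n-2j$ would only appear if we mis-sorted; in fact the single large odd part $2n-2j+1$ comes from the row of length $n-j$ and the $2j$ ones come from the $\beta=[1^j]$ contributions to both rows of the symbol. \emph{Type $C_n$:} here the staircase is shifted so that the analogous computation yields $[2n-2j,2,1^{2j-2}]$, a partition of $2n$ in which odd parts have even multiplicity (the $2j-2$ trailing ones, plus the part $2n-2j$ which is even). \emph{Type $D_n$:} the $D_n$-symbol of $([n-j],[1^j])$ gives $[2n-2j-1,3,1^{2j-2}]$ for $2\le j\le n-2$, while for $j=n-1$ the symbol degenerates and one reads off $[2^2,1^{2n-4}]$ instead; I would check the small cases $j=1$ and $j=n-1$ by hand against the known labelling of the subregular and near-minimal orbits.

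The third and final step is to determine $A(e)$ and the local system. For $e$ in a classical nilpotent orbit, $A(e)$ is an elementary abelian $2$-group whose rank is computed from the partition (number of distinct odd parts minus $1$ in type $B_n$/$D_n$, distinct even parts in type $C_n$, with the usual parity caveats), and I would simply apply that count to the three partitions found above: $[2n-2j+1,1^{2j}]$ has exactly two distinct odd parts $\{2n-2j+1,1\}$ so $A(e)\simeq S_2$ in type $B_n$; likewise $[2n-2j,2,1^{2j-2}]$ has two distinct even parts $\{2n-2j,2\}$ giving $A(e)\simeq S_2$ in type $C_n$; and $[2n-2j-1,3,1^{2j-2}]$ has distinct odd parts $\{2n-2j-1,3,1\}$ — three of them — so $A(e)\simeq S_2\times S_2$ generically, but the Springer component group $\bar A(e)$ relevant here (Lusztig's canonical quotient, which in type $D_n$ can be smaller than $A(e)$) is $S_2$ for $2\le j\le n-2$ and trivial for $j\in\{1,n-1\}$; the latter follows from the standard tables for $D_n$. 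The local system is then read off from the position of the $\beta$-row in the symbol relative to the $\alpha$-row: it is the trivial system in types $B_n,C_n$ (the symbol has $\alpha$ on the ``even'' row), and the nontrivial one in type $D_n$ for $2\le j\le n-2$. I expect the main obstacle to be precisely this last point — pinning down the correct component group and local system in type $D_n$, where the distinction between $A(e)$ and its Springer-relevant quotient, together with the need to separate the degenerate $j=n-1$ case, requires care; everything else is a direct, if slightly tedious, run through the symbol combinatorics.
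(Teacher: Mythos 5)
Your approach coincides with the paper's: the paper's own proof reads simply ``Apply the algorithms in [Carter, Chapter 13.3]. We omit the details.'' So the symbol computations you sketch are exactly what the paper has in mind, and I take those as plausible even though neither you nor the paper records them.

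The component-group discussion, however, has real gaps. In type $C_n$ your rule (``two distinct even parts $\{2n-2j,2\}$, hence $A(e)\cong S_2$'') misfires for $j=n-1$: there $2n-2j=2$, and $[2^2,1^{2n-4}]$ has only one distinct even part. The correct argument must use that $G$ is adjoint. For $Sp_{2n}$ one has $A(e)\cong(\mathbf{Z}/2)^b$ with $b$ the number of distinct even parts; passing to $PSp_{2n}$ one quotients by the image of $-I$, whose coordinate in the slot for an even part $i$ is nontrivial precisely when the multiplicity $m_i$ is odd. Running $j<n-1$ (where $b=2$ and the image of $-I$ is the full diagonal) and $j=n-1$ (where $b=1$ and $m_2=2$ is even, so the image is trivial) through this gives $S_2$ in both cases --- but your count only covers the first. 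In type $D_n$ you appeal to Lusztig's canonical quotient $\bar A(e)$, which is the wrong object: the Springer correspondence here uses $A(e)$ for the group actually at hand, namely the adjoint group $PSO_{2n}$, and Lusztig's canonical quotient is a different, duality-related construction that need not coincide with $A_{PSO_{2n}}(e)$. The calculation should start from $A_{SO_{2n}}(e)\cong(\mathbf{Z}/2)^{a-1}$ with $a$ the number of distinct odd parts and then quotient by the image of $-I\in Z(SO_{2n})$, whose coordinate in the slot for odd part $i$ is nontrivial precisely when $m_i$ is odd. You also miss the degenerate value $j=n-2$, where $2n-2j-1=3$ so $[3,3,1^{2n-6}]$ has two, not three, distinct odd parts; the conclusion $A(e)\cong S_2$ still holds there, but your argument as written does not cover it.
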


\begin{proof}
Apply the algorithms in \cite[Chapter 13.3]{carter:book}.   We omit the details.  
\end{proof}

\begin{corollary} \label{cor:springer:wedgie}
Let $e$ be a nilpotent orbit parametrized by the partition $\lambda$.  Let $k = l(\lambda)$ be the number of parts of $\lambda$.
Then $\wedge^j V$ does not occur in $H^*(\flag_e)$ whenever
\begin{itemize} 
\item $j > \frac{k-1}{2}$ in type $B_n$,
\item $j > \frac{k}{2}$ in types $C_n$ and $D_n$.
\end{itemize} 
\end{corollary}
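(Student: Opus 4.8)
The plan is to deduce Corollary~\ref{cor:springer:wedgie} from the elementary support property of the Springer correspondence, combined with the behaviour of the dominance order on partitions. Recall first that if an irreducible $\chi \in \wr$ occurs in $H^*(\flag_e)$, then $e$ lies in the closure $\overline{\orbit_\chi}$ of the nilpotent orbit attached to $\chi$: indeed, the multiplicity of $\chi$ in $H^*(\flag_e)$ is the total dimension of the stalk at $e$ of the intersection cohomology complex $IC(\overline{\orbit_\chi}, \mathcal{L}_\chi)$, and this vanishes off $\overline{\orbit_\chi}$. We may assume $j \geq 1$, the case $j = 0$ being vacuous. Applying the support property to $\chi = \wedge^j V$, and writing $\orbit_j$ for the orbit attached to it (the regular orbit when $j = 0$, the zero orbit when $j = n$, and the orbit of Lemma~\ref{springer:wedgie} otherwise), we obtain $e \in \overline{\orbit_j}$, hence $\orbit_e \subseteq \overline{\orbit_j}$ since $\overline{\orbit_j}$ is closed and $G$-stable. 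By the standard description of the closure order of nilpotent orbits in the classical types, this forces $\lambda \unlhd \mu$ in the dominance order, where $\mu$ denotes the partition of $\orbit_j$.

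Next I would translate this into a bound on the number of parts. If $\lambda$ has $k$ parts and $\mu$ has $k' > k$ parts with $\lambda \unlhd \mu$, then $\sum_{i=1}^{k}\mu_i < |\mu| = |\lambda| = \sum_{i=1}^{k}\lambda_i$ because $\mu_{k+1}, \dots, \mu_{k'} > 0$, contradicting $\lambda \unlhd \mu$; hence $\lambda \unlhd \mu$ implies $l(\mu) \le l(\lambda) = k$. It then remains to read off $l(\mu)$ from the partitions in Lemma~\ref{springer:wedgie} together with the zero orbit for $j = n$: one checks that for every $1 \le j \le n$ the partition $\mu$ of $\orbit_j$ has exactly $2j+1$ parts in type $B_n$ and exactly $2j$ parts in types $C_n$ and $D_n$. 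Combined with $l(\mu) \le k$, this gives $2j + 1 \le k$ in type $B_n$ and $2j \le k$ in types $C_n, D_n$ whenever $\wedge^j V$ occurs in $H^*(\flag_e)$; the contrapositive is precisely the statement of the corollary.

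The only point requiring care is the verification of the length formulas for $l(\mu)$ at the boundary values of $j$, where the two largest parts of $\mu$ collide --- for instance $j = n-1$ in type $C_n$ (partition $[2^2, 1^{2n-4}]$) or $j = n-2$ in type $D_n$ (partition $[3^2, 1^{2n-6}]$) --- and one should note that in type $D_n$ neither the very-even phenomenon nor the nontriviality of the local system recorded in Lemma~\ref{springer:wedgie} affects the argument, since we use only the necessary direction $\orbit_e \subseteq \overline{\orbit_j} \Rightarrow \lambda \unlhd \mu$, which is insensitive to both.
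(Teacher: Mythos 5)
Your proof is correct and follows essentially the same route as the paper's: apply Lemma~\ref{springer:wedgie} to identify the orbit $\orbit_j$ attached to $\wedge^j V$, invoke the Borho--MacPherson support property to force $e \in \overline{\orbit_j}$, and then use the fact that the partition of $\orbit_j$ has more parts than $\lambda$ under the stated hypotheses to derive a contradiction. You simply spell out the intermediate step (that $\lambda \unlhd \mu$ with $|\lambda|=|\mu|$ forces $l(\mu)\le l(\lambda)$, together with the explicit part counts $2j+1$ and $2j$) that the paper leaves implicit in the word ``consequently.''
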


\begin{proof}
Under the hypotheses, the partition for the 
 nilpotent orbit $\orbit_j$ attached to $\wedge^j V$ in Lemma \ref{springer:wedgie} has more parts than the partition for $e$.  Consequently,
 $e$ is not contained in the closure of $\orbit_j$.  It follows from \cite[Corollary 2]{borho-mac} (see also \cite[Theorem 4.4]{shoji:green_classical}) that the Springer representations attached to 
 $\orbit_j$ cannot occur in $H^*(\flag_e)$. 
\end{proof}

\begin{remark}
The inequality in the corollary turns out to be optimal in types $B_n$ and $C_n$, but it is not optimal in general in $D_n$.  For example, in $D_4$ with $e = [3,3,1,1]$, then $\langle Q_{e}, \wedge^2 V \rangle = q^3$ and $\langle Q_{e}, \wedge^3 V \rangle = 0$ and the result is optimal.  On the other hand, with $e = [3,2,2,1]$ then already $\langle Q_{e}, \wedge^2 V \rangle = 0$.   This failure makes the inductive proof that we give in type $D_n$ a bit more complicated. 
\end{remark}

\subsection{Springer correspondence, in general}

In the next lemma we give a modest constraint on the bipartitions $(\alpha, \beta)$ that can be attached to an arbitrary nilpotent element $e$ under
the Springer correspondence.

\begin{lemma}  \label{lem:constraint}
Let $e$ be a nilpotent element in type $B_n, C_n,$ or $D_n$ with partition $\lambda$.
Let $(\alpha, \beta)$ be the Springer representation attached to  $e$ and local system $\phi \in \ar$.  Let $k = l(\lambda)$. 
\begin{itemize} 
\item Type $C_n$:  if $k$ is odd, then $l(\alpha) = \frac{k+1}{2}$ and  if $k$ is even, then $l(\beta) = \frac{k}{2}$;
\item Type $B_n$:  either $l(\alpha) = \frac{k+1}{2}$ or $l(\beta) = \frac{k-1}{2}$  ($k$ is always odd);
\item Type $D_n$:  either $l(\alpha) = \frac{k}{2}$ or $l(\beta) = \frac{k}{2}$ ($k$ is always even).
\end{itemize} 
\end{lemma}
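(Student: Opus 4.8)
The plan is to deduce all three assertions, and the parenthetical parity statements, directly from the symbol description of the Springer correspondence in classical types, following \cite[Chapter 13.3]{carter:book} (see also \cite{shoji:green_classical}). Recall that in types $B_n$ and $C_n$ the irreducibles of $W$ are indexed by \emph{reduced symbols of defect $1$}: a strictly increasing top row of some length $m+1$ together with a strictly increasing bottom row of length $m$, subject to the reducedness condition that $0$ does not occur simultaneously in both rows; in type $D_n$ one uses reduced symbols of defect $0$ --- two strictly increasing rows of equal length $m$, regarded as an unordered pair, the degenerate ones (equal rows) corresponding to the split pairs $\chi^{\alpha,\alpha}_I,\chi^{\alpha,\alpha}_{II}$. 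The bipartition $(\alpha,\beta)$ is read off by reversing each row and subtracting the staircase $0,1,2,\dots$; the elementary remark underlying everything is that $l(\alpha)$ equals the length of the top row minus the number of its entries that already lie on the staircase, and similarly $l(\beta)$ for the bottom row, and --- because the rows are strictly increasing --- such a collapse occurs exactly when the entry $0$ is present in that row. In particular $l(\alpha)$ equals the top-row length precisely when $0$ is absent from the top row, and likewise for $l(\beta)$.

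First I would record the lengths of the two rows of the reduced symbol of an orbit with partition $\lambda$, together with the parity of $k=l(\lambda)$. Building the symbol from $\lambda$ and a local system $\phi\in\ar$ proceeds by padding $\lambda$ with zeros up to a prescribed number of parts (whose parity is fixed by the type), adding the staircase, and splitting the resulting strictly increasing integer sequence into the two rows; the multiplicity conditions defining the nilpotent orbits --- even parts of even multiplicity in types $B_n$ and $D_n$, odd parts of even multiplicity in type $C_n$ --- are exactly what makes a legal split possible. A short count shows that $k$ is automatically odd in type $B_n$ and automatically even in type $D_n$, whereas in type $C_n$ both parities occur, which is why the statement bifurcates there. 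Tracking the split shows that in reduced form the two row lengths are $\tfrac{k+1}{2}$ and $\tfrac{k-1}{2}$ in type $B_n$; $\tfrac{k}{2}$ and $\tfrac{k}{2}$ in type $D_n$; and in type $C_n$ again $\tfrac{k+1}{2}$ and $\tfrac{k-1}{2}$ when $k$ is odd, but $\tfrac{k}{2}+1$ and $\tfrac{k}{2}$ when $k$ is even.

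Granting these, the lemma falls out. In types $B_n$ and $D_n$, reducedness says $0$ lies in at most one of the two rows, so at least one row is collapse-free and hence has length equal to $l(\alpha)$ (if it is the top row) or $l(\beta)$ (if it is the bottom row); this is precisely the dichotomy asserted, with values $\tfrac{k+1}{2},\tfrac{k-1}{2}$ in type $B_n$ and $\tfrac{k}{2}$ in type $D_n$, and in the degenerate $D_n$ case both alternatives hold and $l(\alpha)=l(\beta)=\tfrac{k}{2}$. In type $C_n$ one needs the sharper claim that a \emph{specified} row never contains $0$: when $k$ is odd there is no padding and every part of $\lambda$ is positive, so every entry of the shifted sequence is positive and the longer row (length $\tfrac{k+1}{2}$, producing $\alpha$) is collapse-free, giving $l(\alpha)=\tfrac{k+1}{2}$; when $k$ is even exactly one padding zero enters, and one checks it is always placed in the longer row and never in the shorter row (length $\tfrac{k}{2}$, producing $\beta$), giving $l(\beta)=\tfrac{k}{2}$. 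The point to be verified in both $C_n$ cases is that this holds for \emph{every} local system $\phi$ attached to $e$, not merely the special one: here one uses that, for $\mathrm{Sp}_{2n}$, the appearing local systems and their associated symbols are governed by the even parts of $\lambda$, and the corresponding re-distributions of symbol entries only ever move entries arising from even parts of $\lambda$, hence never move the entry $0$ out of --- or into --- the designated row.

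The main obstacle is precisely this last point for type $C_n$: showing that the location of $0$ in the reduced symbol is independent of the local system. This requires the Lusztig--Spaltenstein bookkeeping relating $\ar$ to the combinatorics of symbols --- equivalently, the parity constraint satisfied by all $\mathrm{Sp}_{2n}$-symbols --- rather than just the special Springer symbol. By contrast the $B_n$ and $D_n$ cases are soft, resting only on the reducedness convention together with the easy count of row lengths. The rest --- keeping aligned the three sets of conventions (defect $1$ versus $0$; partitions of $2n+1$ versus $2n$; the even- versus odd-multiplicity constraints) and treating the degenerate $D_n$ symbols separately --- is routine; alternatively one can skip the uniform argument and, exactly as for Lemma~\ref{springer:wedgie}, simply run the algorithm of \cite[Chapter 13.3]{carter:book} on a general $\lambda$ and read off $l(\alpha)$ and $l(\beta)$.
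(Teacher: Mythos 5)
Your approach is the same as the paper's: run the symbol algorithm of \cite[Chapter 13.3]{carter:book}, determine the row lengths, and argue that a prescribed row cannot contain the entry $0$, from which $l(\alpha)$ or $l(\beta)$ is forced. The $B_n$ and $D_n$ halves of your argument are essentially the paper's, phrased a bit more abstractly: the row lengths are fixed within a similarity class, a reduced symbol has $0$ in at most one row, and the $0$-free row is collapse-free, which is exactly the claimed dichotomy. That part is fine.

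The $C_n$ half has a genuine problem. You claim that when $k$ is odd there is no padding, ``every part of $\lambda$ is positive, so every entry of the shifted sequence is positive,'' concluding that \emph{both} rows of the symbol are collapse-free. This is false, and the lemma would be false if it were true. Take $\lambda = [4,1,1]$ in $C_3$ with the trivial local system: the Springer representation is $\chi^{[2,1],\emptyset}$, so $l(\beta)=0 \neq \frac{k-1}{2}=1$, and indeed the reduced symbol is $\begin{pmatrix}1&3\\&0\end{pmatrix}$ --- the bottom row does contain $0$, so not ``every entry of the shifted sequence is positive.'' The lemma's conclusion ($l(\alpha)=\frac{k+1}{2}=2$) is still true, but your proposed reason for it is not. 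The paper's actual argument traces the algorithm to show that the \emph{designated} row (first row if $k$ is odd, second if $k$ is even) has a nonzero minimal entry for the trivial local system, and then argues about what similar symbols can look like; it does not try to show every symbol entry is positive. The second gap you flag yourself --- that the location of $0$ is independent of the local system $\phi$ --- is the harder point, and your sketch (``re-distributions only move entries arising from even parts of $\lambda$'') is a reasonable heuristic but is not proved. The paper's version of that step leans on comparing the smallest nonzero entry of the symbol with what the second row must start with; you would want to either carry out that comparison explicitly or invoke the relevant constraint on $Sp_{2n}$-symbols with a precise reference, rather than leave it at ``one uses\dots.''
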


\begin{proof}
The proof uses the algorithms in \cite[Chapter 13.3]{carter:book}.   We sketch the arguments:

\medskip

\textbf{Type $C_n$}.   Starting with $\lambda$ we will calculate the bipartition $(\xi, \eta)$ corresponding to the trivial local system on $\orbit_e$.
First we make sure that the number of parts of $\lambda$ is even by tacking on a $0$ to $\lambda$ if necessary (however, we will not adjust  
the definition of $k$).
Next, list the even parts of $\lambda$ in nondecreasing order as $x_1 \leq x_2 \leq \dots \leq x_{m-1} \leq x_{m}$.  Note that $m$ is guaranteed to be even since the number of odd parts is even and the total number of parts of $\lambda$ is now even by setting $x_1 = 0$ if necessary.

Chasing through the algorithm in \cite{carter:book}, we see that $x_i$ contributes 
$\frac{x_i}{2}$ to the bipartition: to $\xi$ if $i$ is even and to $\eta$ if $i$ is odd.   This is a result of 
the fact that the odd parts in $\lambda$ occur with even multiplicity.

List the odd parts of $\lambda$ in nondecreasing order as $y_1 = y_2 \leq y_3 = y_4 \leq \dots \leq y_{2l-1} = y_{2l}$.
Let $y_{2i-1} = y_{2i} = y$ be consecutive numbers occurring in this sequence.
If the number of even parts of $\lambda$ less than $y$ is odd, then
the contribution of this pair is $\frac{y+1}{2}$ to $\xi$ and $\frac{y-1}{2}$ to $\eta$.
On the other hand, if the number of parts of $\lambda$ less than $y$ is even, then
the contribution of this pair is $\frac{y-1}{2}$ to $\xi$ and $\frac{y+1}{2}$ to $\eta$.

Now assume $k$ is even.  If the smallest part of $\lambda$, which we have denoted $x_1$, is even, then $x_1$ is nonzero and $\eta$ begins will the nonzero number $\frac{x_1}{2}$.  Moreover each subsequent consecutive pair of parts of $\lambda$ contributes a number to $\xi$ and a number to $\eta$.  Since the contributions in this way make $\xi$ and $\eta$ nondecreasing, we see that $\eta$ contains exactly $\frac{k}{2}$ (nonzero) parts.
It follows that the symbol attached to $e$ begins  
$$\begin{pmatrix}
     0  & & \dots \\
       & \frac{x_1}{2} + 1 &&\dots 
     \end{pmatrix}$$
and the second row of the symbol consists of exactly $\frac{k}{2}$ entries.
Moreover, the smallest nonzero number in the symbol is $\frac{x_1}{2} + 1$ and therefore any permutation of the entries of the symbol to produce an similar symbol must have an initial entry in the second row of at least $\frac{x_1}{2}+1$.  
We conclude that for any local system $\phi$ on $\orbit_e$ the corresponding bipartition $(\alpha, \beta)$ must have a $\beta$ with $\frac{k}{2}$ parts.

On the other hand, if the smallest part of $\lambda$ is odd, say $y$, then $\eta$ begins with 
$\frac{y+1}{2}$, a nonzero number, and contains $\frac{k}{2}$ (nonzero) parts.
It follows that the symbol attached to $e$ begins 
$$\begin{pmatrix}
     0  && \dots \\
       & \frac{y+1}{2} + 1&& \dots 
     \end{pmatrix}$$
and the second row of the symbol contains exactly $\frac{k}{2}$ entries.
Hence the smallest nonzero number in the symbol is $\frac{y+1}{2} + 1$ and as before 
we conclude that for any local system the corresponding bipartition $(\alpha, \beta)$ must have a $\beta$ with $\frac{k}{2}$ parts.

The situation is similar if $k$ is odd.   
Here, we deduce that the first row of the symbol begins with a nonzero number and contains exactly $\frac{k+1}{2}$ entries.
Thus any similar symbol must have a first row that begins with a nonzero number and  it follows that 
for any local system the corresponding bipartition $(\alpha, \beta)$ must have an $\alpha$ with $\frac{k+1}{2}$ parts.

\medskip

\textbf{Type $B_n$}.    Carrying out a similar analysis as in type $C_n$, we find that the symbol attached to $e$ has
$\frac{k+1}{2}$ entries in its first row and $\frac{k-1}{2}$ in its second row.  Moreover the initial entry in the second row is nonzero.
Hence any similar symbol provided by a nontrivial $\phi$ will produce a nonzero initial entry either in the first row or in the second row.
The precise condition that controls which of the two situations occurs depends on the value of $\phi(a_1)$ (see  \cite[Chapter 13.3]{carter:book} for notation).
If $\phi(a_1)=1$, then a nonzero entry will definitely occur in the second row
and if $\phi(a_1)=-1$, then a nonzero entry will definitely occur in the first row of the symbol.
It follows that for any local system the corresponding bipartition $(\alpha, \beta)$ must have either an $\alpha$ with $\frac{k+1}{2}$  parts or
a $\beta$ with $\frac{k-1}{2}$  parts.

\medskip

 \textbf{Type $D_n$}.   In this case, the symbol attached to $e$ has
$\frac{k}{2}$ entries in each row and the initial entry in the second row is nonzero.  In fact, 
the rows of the symbol are considered unordered in type $D_n$.
Hence any similar symbol provided by a nontrivial $\phi$ will produce a nonzero initial entry in one of the rows
and therefore for any local system, the corresponding bipartition $(\alpha, \beta)$ must have either an $\alpha$ or $\beta$ with $\frac{k}{2}$ parts.   As noted previously, the bipartition $(\alpha, \beta)$ is unordered.
\end{proof}

\subsection{\texorpdfstring{Explicit formula for $\ttau(\chi)$}{Explicit formula for chi}}

The functions $\ttau(\chi)$ from Section \ref{subsection:gns} were computed in \cite{gyoja:invariants1} using results from \cite{macdonald:book}.

In types $B_n$ and $C_n$,

\begin{equation}  \label{GNS_BC}
\ttau(\rep) =   q^{2n(\alpha) + 2n(\beta) + |\beta|} 
\prod_{x' \in \alpha}  \frac{1 + yq^{2c(x')+1}}{1 - q^{2h(x')}}
\prod_{x'' \in \beta}  \frac{1 + yq^{2c(x'')-1}}{1 - q^{2h(x'')}}
\end{equation}

In type $D_n$,

\begin{eqnarray} \label{GNS_D}
\ttau(\rep) =   \frac{q^{2n(\alpha) + 2n(\beta)}} {2 \prod_{x \in \alpha \cup \beta} (1 - q^{2h(x)} )} 
\lbrace
q^{|\beta|} \prod_{x' \in \alpha}  (1 + yq^{2c(x')+1})
\prod_{x'' \in \beta}  (1 + yq^{2c(x'')-1}) + \\ \nonumber
q^{|\alpha|} \prod_{x' \in \alpha}  (1 + yq^{2c(x')-1})
\prod_{x'' \in \beta}  (1 + yq^{2c(x'')+1}) 
\rbrace
\end{eqnarray}

and

\begin{eqnarray}
\ttau(\chi^{\alpha,\alpha}_I) =  
\ttau(\chi^{\alpha,\alpha}_{II}) =  
 q^{4n(\alpha) +|\alpha|}
\prod_{x \in \alpha}
\frac{
(1 + yq^{2c(x)+1}) (1 + yq^{2c(x)-1})
}
{
(1 - q^{2h(x)})^2
}
\end{eqnarray}

\medskip

The next lemma follows from Lemma \ref{lem:constraint} and the above formulas for $\ttau(\rep)$.

\begin{lemma}  \label{cor:divisors}
Let $\chi_{e, \phi}$ be the (nonzero) 
Springer representation attached to the nilpotent element $e$ with local system $\phi \in \ar$. 
Let $\lambda$ be the partition of $e$ and let $k = l(\lambda)$. 
Then 
$\ttau(\chi_{e, \phi})$ is divisible by 
$$1+q^{-c}y$$ 
in $\qq(q)[y]$ for 
\begin{itemize} 
\item Type $B_n$: $c \in \{ 1, 3, \dots, k-2 \}$
\item Type $D_n$: $c \in \{1, 3, \dots, k-3 \}$ 
\item Type $C_n$: $c$ odd with $c  \leq k-1$  ($k$ may be odd or even)
\end{itemize} 
\end{lemma}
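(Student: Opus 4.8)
The plan is to read off the divisibility directly from the closed formulas for $\ttau(\chi^{\alpha,\beta})$ in Equations \ref{GNS_BC}, \ref{GNS_D}, together with the constraint on the number of parts of $\alpha$ and $\beta$ provided by Lemma \ref{lem:constraint}. The key observation is that a factor $\alpha$ with $l(\alpha) = t$ parts contains the boxes $x' = (0,0), (1,0), \dots, (t-1,0)$ down its first column, with contents $c(x') = 0, -1, -2, \dots, -(t-1)$, so the numerator product $\prod_{x' \in \alpha}(1 + yq^{2c(x')+1})$ contains the subproduct $\prod_{i=0}^{t-1}(1 + yq^{1-2i}) = (1+yq)(1+yq^{-1})(1+yq^{-3})\cdots(1+yq^{-(2t-3)})$; similarly $\prod_{x'' \in \beta}(1 + yq^{2c(x'')-1})$ over the first column of a $\beta$ with $l(\beta) = u$ parts contains $\prod_{i=0}^{u-1}(1 + yq^{-1-2i}) = (1+yq^{-1})(1+yq^{-3})\cdots(1+yq^{-(2u-1)})$. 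None of these linear factors is cancelled by the denominator, which is a product of terms $1 - q^{2h(x)}$ involving only $q$, not $y$.

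First I would do type $C_n$: by Lemma \ref{lem:constraint}, either $l(\alpha) = \frac{k+1}{2}$ (when $k$ is odd) or $l(\beta) = \frac{k}{2}$ (when $k$ is even). In the first case the first-column subproduct of the $\alpha$-factor supplies $(1+yq^{-c})$ for every odd $c$ with $c \le 2\cdot\frac{k+1}{2} - 3 = k-2$, hence for all odd $c \le k-1$ since $k-1$ is even; in the second case the first-column subproduct of the $\beta$-factor supplies $(1+yq^{-c})$ for every odd $c$ with $c \le 2\cdot\frac{k}{2} - 1 = k-1$. Either way we get divisibility by $1 + q^{-c}y$ for all odd $c \le k-1$, as claimed. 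Next I would do type $B_n$: here Lemma \ref{lem:constraint} gives $l(\alpha) = \frac{k+1}{2}$ or $l(\beta) = \frac{k-1}{2}$. In the first case the $\alpha$ first column gives odd $c \le k-2$; in the second the $\beta$ first column gives odd $c \le 2\cdot\frac{k-1}{2} - 1 = k-2$. Both give the set $\{1,3,\dots,k-2\}$, matching the statement.

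For type $D_n$ the argument is the same in spirit but one must look at the two-term sum in Equation \ref{GNS_D} (and the special formula for $\chi^{\alpha,\alpha}_{I}, \chi^{\alpha,\alpha}_{II}$). By Lemma \ref{lem:constraint}, $l(\alpha) = \frac{k}{2}$ or $l(\beta) = \frac{k}{2}$, and since the bipartition is unordered I may assume $l(\alpha) = \frac{k}{2}$. I would verify that \emph{each} of the two summands in $\{\cdots\}$ is individually divisible by $\prod_{c \text{ odd},\, c \le k-3}(1 + yq^{-c})$: the first summand via the $\alpha$ first-column subproduct $\prod_{x'\in\alpha}(1+yq^{2c(x')+1})$, which supplies odd $c$ up to $2\cdot\frac{k}{2}-3 = k-3$; the second summand via the $\alpha$ first-column subproduct $\prod_{x'\in\alpha}(1+yq^{2c(x')-1})$, which supplies $c$ in $\{1,3,\dots,k-1\}\supseteq\{1,3,\dots,k-3\}$. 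Since both summands are divisible and the common prefactor $q^{2n(\alpha)+2n(\beta)}/(2\prod(1-q^{2h(x)}))$ has no $y$, so is the sum. The $\chi^{\alpha,\alpha}_{I/II}$ case is immediate from its formula by the same first-column reasoning with $l(\alpha)=\frac{k}{2}$.

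I expect the main obstacle to be purely bookkeeping rather than conceptual: making sure the inequality on $c$ is tight, i.e.\ that I am not claiming one factor too many. The subtlety is that the first-column contents of $\alpha$ are $0, -1, \dots, -(l(\alpha)-1)$, so $2c(x')+1$ ranges over $1, -1, -3, \dots, 1-2(l(\alpha)-1) = 3 - 2l(\alpha)$, and one must check that $3 - 2l(\alpha)$ is exactly $-(k-2)$ in type $B$ (where $l(\alpha) = \frac{k+1}{2}$) and $-(k-1)$ in type $C$ (where in the odd case $l(\alpha)=\frac{k+1}{2}$ gives $-(k-2)$, but the claim only needs odd $c \le k-1$, which $-(k-2)$ covers since $k-1$ is even). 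In type $D$ the only real care is to handle the unordered bipartition convention and to confirm divisibility summand-by-summand rather than only for the sum; once that is done, no cancellation against the $q$-only denominator is possible, so the divisibility in $\qq(q)[y]$ follows. I would present the $C_n$ computation in full detail and then remark that $B_n$ and $D_n$ are entirely analogous.
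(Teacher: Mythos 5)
Your proof is correct and follows essentially the same route as the paper: apply Lemma \ref{lem:constraint} to pin down a long first column in $\alpha$ or $\beta$, read off the contents down that column, and observe that the resulting factors $(1+yq^{2c(x)\pm 1})$ in the numerator of the Gyoja--Nishiyama--Shimura formula cannot be cancelled by the $y$-free denominator. The only cosmetic difference is that in type $D_n$ you normalize so that $\alpha$ (rather than $\beta$, as in the paper) has $\frac{k}{2}$ parts, which is equivalent by the unordered convention; you also explicitly dispose of the $\chi^{\alpha,\alpha}_{I/II}$ case, which the paper leaves implicit.
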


\begin{proof}  In type $B_n$, 
if $\alpha$ has $\frac{k+1}{2}$ parts, then the values of $2c(x') +1$
as $x'$ runs down the first column of the Young diagram of $\alpha$ are $1, -1, -3, \dots,  2 -k$
and the result follows from Formula \ref{GNS_BC}.  
The other possibility is that $\beta$ has $\frac{k-1}{2}$ parts, in which case the values of $2c(x'') -1$
as $x''$ runs down the first column of the Young diagram of $\beta$ are $-1, -3, \dots,  2 -k$
and the result follows from Formula \ref{GNS_BC}.  

In type $C_n$ if $k$ is even, then $\beta$ has $\frac{k}{2}$ parts according the lemma.  
The values of $2c(x'') -1$
as $x''$ runs down the first column of the Young diagram of $\beta$ are $-1, -3, \dots,  1 -k$
and the result follows from Formula \ref{GNS_BC}.
On the other hand if $k$ is odd, then $\alpha$ has $\frac{k+1}{2}$ parts.
The values of $2c(x') +1$
as $x'$ runs along the first column of the Young diagram of $\alpha$ 
are $1, -1, -3, \dots,  2 -k$
and the result follows from Formula \ref{GNS_BC}.

In type $D_n$, we can assume that $\beta$ has $\frac{k}{2}$ parts according the lemma (the bipartition is unordered).
The values of $2c(x'') -1$ 
as $x''$ runs down the first column of the Young diagram of $\beta$ are $-1, -3, \dots,  1 -k$.  However,
the values of $2c(x'') +1$ 
as $x''$ runs down the first column of the Young diagram of $\beta$ are $1, -1, -3, \dots,  3 -k$,
which only guarantees by Formula \ref{GNS_D}
that $1+q^{-c}y$ in the stated range divides $\ttau(\chi_{e, \phi})$.
\end{proof}

\subsection{\texorpdfstring{Proof of Theorem  \ref{weaker_theorem} in types $B_n$ and $C_n$}{Proof in types Bn or Cn}}

Let $e$ be a nilpotent with partition $\lambda$ having $k$ parts.  We will prove 
$$ \sum_{\phi \in \ar} \sum_{j=0}^n \langle Q_{e, \phi}, \wedge^{n-j} V \rangle  y^j \phi =  y^{n-s} \prod_{i=1}^s (q^{2i-1}+ y)$$
where $s = \frac{k-1}{2}$ in type $B_n$
and $s = \lfloor  \frac{k}{2}   \rfloor$ in type $C_n$.   In particular the terms in the sum on the left are zero for $\phi$ nontrivial.
By looking at the coefficient of $y^{n-1}$
on both sides, which measures the occurrences of $V$ in $Q_{e, \phi}$, we will have proved that 
the $m_j$'s are $1, 2, \dots, 2s-1$ and all the $\pi_j$'s are trivial.  Of course the former was already known by  \cite{lehrer-shoji:reflections}, \cite{spaltenstein:reflection}.
Thus Equation \ref{theorem:variant} will hold and so will Theorem \ref{weaker_theorem}.

Define 
$$g_{e, \phi} = \sum_{j} \langle Q_{e, \phi}, \wedge^{n-j} V \rangle  y^j.$$  
We need to show that 
$g_{e, 1} = y^{n-s} \prod_{i=1}^s (q^{2i-1}+ y)$ and $g_{e, \phi} = 0$ for $\phi$ nontrivial.
Following \cite{shoji:green_classical},
let 
$$s_{\phi, \phi'} := \sum_{c} |\orbit_{e_c}| \phi(c) \phi'(c),$$
which is an element in $\qq[q]$.
Then Equation \ref{h_equation} becomes
$$h_{e, \phi} = \sum_{\phi'} s_{\phi, \phi'} g_{e, \phi'}.$$
From now on we will insist that $\phi, \phi' \in \ar$ actually appear in the Springer correspondence.
By a result of Shoji \cite{shoji:green_classical} $Q_{e,\phi}$ is nonzero if and only if 
$\phi$ occurs in the Springer correspondence.

We proceed by induction on the dimension of $\orbit_e$.   
For the zero orbit, the result is known by Solomon \cite{solomon}.
Let $\chi_{e, \phi}$ be a (nonzero) Springer representation for $e$, with local system $\phi \in \ar$.  
Plug $\chi = \chi_{e, \phi}$ into  Equation \ref{Main_equation}
and use \cite{borho-mac} to obtain
\begin{equation}  \label{Main_equation_2}
(-1)^n |G^F| \ttau(\chi) =   h_{e,\phi}  q^{d(e)} + \sum_{e'} \sum_{\phi'} h_{e', \phi'} \langle Q_{e', \phi'}, \chi \rangle 
\end{equation}
where $d(e)$ is the dimension of $\flag_e$
and the sum on the right is now over orbit representatives $e'$ with $\orbit_{e'}  \subsetneq \overline{\orbit}_e$ and so by induction the result is known for $e'$.   
Now the partition $\lambda_{e'}$ for $e'$ is obtained from $\lambda$ by moving boxes down 
in the Young diagram for $\lambda$; in particular,
$$l(\lambda_{e'}) \geq l(\lambda)$$ and therefore
$$\prod_{i=1}^s (q^{2i-1}+ y)$$ divides $g_{e',1}$ in $\zz[q,y]$
and $g_{e', \phi'} = 0$ for nontrivial $\phi' \in \hat{A}(e')$.
Thus it divides all $h_{e', \phi'}$ in $\qq[q,y]$.
By Lemma \ref{cor:divisors} this product also divides the left side of Equation \ref{Main_equation_2}.
Hence it divides $h_{e, \phi}$ for all $\phi \in \ar$.

Now by Corollary \ref{cor:springer:wedgie} we know that 
$\wedge^{j} V$ does not occur in $Q_{e, \phi}$ if $j > s$ for any $\phi \in \ar$.
This translates into the fact that $y^{n-s}$ divides $g_{e, \phi}$ for each $\phi \in \ar$.
Since the degree of $h_{e, \phi}$ in $y$ is at most $n$, we conclude that for each $\phi \in \ar$ that 
$$h_{e, \phi} =  a_{\phi} y^{n-s} \prod_{i=1}^s (q^{2i-1}+ y),$$ 
where $a_{\phi} \in \qq[q]$. 
In other words, 
$$\sum_{\phi'} s_{\phi, \phi'} g_{e,\phi'} = a_{\phi} y^{n-s} \prod_{i=1}^s (q^{2i-1}+ y).$$ 

Finally by \cite[Lemma 4.11]{shoji:green_classical}, the matrix 
$(s_{\phi, \phi'})$ is invertible over $\mathbf{Q}(q)$.
It follows that each $g_{e, \phi}$ is equal to $y^{n-s} \prod_{i=1}^s (q^{2i-1}+ y)$ times a polynomial in $\zz[q]$
since $g_{e, \phi}$ is of degree at most $n$ in $y$.
Examining the coefficient of $y^n$ in $g_{e, \phi}$, which keeps track of the trivial representation of $W$ in $Q_{e,\phi}$,
we see that $g_{e, \phi} = 0$ for $\phi$ nontrivial since the trivial representation does not occur
in $Q_{e,\phi}$ for $\phi$ nontrivial.  Moreover, the trivial representation only occurs in degree zero in $Q_{e,1}$ and it only occurs once,
so that 
$$g_{e, 1} = y^{n-s} \prod_{i=1}^s (q^{2i-1}+ y).$$
This completes the proof in types $B_n$ and $C_n$.  

\subsection{\texorpdfstring{Type $D_n$}{Type Dn}}

\subsubsection{Proof of Theorem \ref{weaker_theorem}.}

The proof proceeds as in types $B_n$ and $C_n$, by induction on the dimension of $\orbit_e$, except the induction only gets us so far in general.   


Let $e$ be a nilpotent element with partition $\lambda$ having $k$ parts ($k$ must be even).
Imitating the $B_n/C_n$ proof, Lemma \ref{cor:divisors} and induction imply that
$$\prod_{i=1}^{\frac{k}{2} - 1} (q^{2i-1}+ y)$$ divides $h_{e, \phi}$ for all $\phi \in \ar$. 
Next, by Corollary \ref{cor:springer:wedgie} we know that 
$\wedge^{j} V$ does not occur in $Q_{e, \phi}$ if $j > \frac{k}{2}$ for any $\phi \in \ar$.
Together this means that
$$b(q,y) := y^{n - \frac{k}{2}}\prod_{i=1}^{\frac{k}{2} - 1} (q^{2i-1}+ y)$$ divides $h_{e, \phi}$ for all $\phi \in \ar$.
Thus it divides $g_{e, \phi}$ for all $\phi \in \ar$ as in the type $B_n/C_n$ proof.

Now $b$ is of degree $n-1$ in $y$ and $g_{e, 1}$ is monic of degree $n$ in $y$ 
since the trivial representation only occurs in degree zero in $Q_{e,1}$.
Hence 
$$g_{e, 1} = b(q,y)(y + c_1)$$  with $c_1 \in \zz_{\geq 0}[q]$. 
On the other hand, for nontrivial $\phi$, $g_{e, \phi}$ is at most degree $n-1$ in $y$ since the trivial representation
does not occur in $Q_{e, \phi}$.  Hence in that case,
$$g_{e, \phi} = b(q,y)c_{\phi}$$  with $c_{\phi} \in \zz_{\geq 0}[q]$. 

Next we study the coefficient of $y^{n-1}$ in $g_{e,\phi}$, which measures the occurrences of $V$ in 
$Q_{e, \phi}$, 
i.e.\! the $m_j$'s.
By looking at $g_{e, 1}$, we conclude $1, 3, \dots, k-3$ occur among the $m_j$'s and these occur with $\pi_j$ trivial.
It turns out there is at most one additional $m_j$ (see Theorem \ref{spalt:D} below) and this is enough to finish the proof.  
First, if there is no additional $m_j$, then $g_{e,1} = b(q,y)y$ and $g_{e,\phi}=0$ for $\phi$ nontrivial and the proof is complete.
Second, if there is one additional $m_j$ with $\pi_j$ trivial, then 
$g_{e,1} = b(q,y)(y+q^{m_j})$ and $g_{e,\phi}=0$ for $\phi$ nontrivial and the proof is complete.
Finally, if there is an additional $m_j$ with $\pi_j$ nontrivial, then 
$$g_{e,\pi_j} = b(q,y)q^{m_j}$$ and 
$$g_{e,1} = b(q,y)y$$ and all other $g_{e, \phi}$ are zero and this is statement of the theorem in that case, completing the proof.
In Theorem \ref{spalt:D} and Proposition \ref{me:pij}, we give the explicit value of the $m_j$ and $\pi_j$ that the inductive proof misses.


%

\subsubsection{\texorpdfstring{The values of $m_j$}{The values of fj}}

The values of the $m_j$'s were computed by Spaltenstein \cite{spaltenstein:reflection} extending the work in \cite{lehrer-shoji:reflections}.

\begin{theorem}\label{spalt:D}   \cite{spaltenstein:reflection} 
Let $r$ be the number of parts of $\lambda$ which are not equal to $1$.  The number of occurrences of $V$ in $H^*(\flag_e)$ is 
$s =  \frac{k}{2}-1$ if $r$ is odd and $s = \frac{k}{2}$ if $r$ is even.

If $r$ is odd,
$$m_j = 2j-1  \text{  for }  j \in \{1, 2, \dots, s\}.$$

If $r$ is even,
$$m_j = 2j-1 \text{  for } j \in \{1, 2, \dots, s-1\}$$
and 
$$m_s = {\frac{k + r - 2}{2}}.$$
\end{theorem}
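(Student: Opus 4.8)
The plan is to reduce the statement to a computation on symbols/Green functions, using the same machinery (Carter's algorithms together with the Springer correspondence) that drove Lemma \ref{lem:constraint} and Lemma \ref{springer:wedgie}. Since the number of occurrences of $V$ in $H^*(\flag_e)$ with multiplicities in the variable $q$ is recorded by $\langle Q_e, V\rangle_W$, and the result we want concerns only the exponents $m_j$ (not the $\pi_j$), I would first observe that it suffices to compute, for each $\chi^{\alpha,\beta}$ with $\chi^{\alpha,\beta}$ occurring in $H^*(\flag_e)$, the fake-degree contribution $\langle Q_e, V \rangle$ can be extracted from the factorization of $\ttau(\chi^{\alpha,\beta})$ in Formula \ref{GNS_D}: the coefficient of $y$ in $\ttau(\chi)$, divided by the Poincaré series, tells us in which degrees $V$ appears in the graded piece of $S$ transforming like $\chi$. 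Combining with the known structure of $Q_e$ as a sum of such fake-degree pieces (Shoji's parametrization), the $m_j$'s are read off from the first-column contents of the bipartition $(\alpha,\beta)$ attached to $e$ and its local systems.

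Concretely, the key steps would be: (1) Run Carter's algorithm in type $D_n$ to write down the symbol $\Lambda_e$ attached to $e$ from the partition $\lambda$ — exactly as in the proof of Lemma \ref{lem:constraint}, splitting $\lambda$ into its odd and even parts and recording the resulting two rows of $\Lambda_e$. The parity of $r$ (the number of parts $>1$) controls whether the initial entries of the two rows of the symbol coincide (forcing a special symbol, hence $A(e)\neq 1$ and a nontrivial local system contributing) or not. (2) From $\Lambda_e$, enumerate the similar symbols obtained by permuting entries between the two rows, which parametrize the local systems $\phi$ occurring in the Springer correspondence; each gives a bipartition $(\alpha_\phi,\beta_\phi)$. (3) For each such $(\alpha_\phi,\beta_\phi)$, use Formula \ref{GNS_D} to determine the lowest power of $q$ in which $V$ occurs in the $\chi^{\alpha_\phi,\beta_\phi}$-isotypic part, namely via the first-column contents $2c(x)\pm 1$; the set of these minimal exponents, as $\phi$ ranges, is exactly $\{m_1,\dots,m_s\}$. (4) Carry out the bookkeeping: when $r$ is odd, the symbol is "as generic as possible," the relevant first-column contents run through $1,-1,-3,\dots$, and one gets $m_j = 2j-1$ for $j=1,\dots,\frac{k}{2}-1$ with no extra term; when $r$ is even, the symbol is forced to have an equal or near-equal pair of initial entries, which both removes one of the $2j-1$ values and inserts instead the value $\frac{k+r-2}{2}$ coming from the box where the two rows "collide."

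I expect the main obstacle to be step (4): correctly tracking, through Carter's algorithm in type $D_n$ with its unordered rows and the possibility of a special (degenerate) symbol, which first-column content is suppressed and why the replacement value is precisely $\frac{k+r-2}{2}$ rather than something nearby. This requires careful handling of the shift conventions in the symbol (the "$0,1,2,\dots$" padding added to each row), and of the interaction between the even-part contributions and the odd-part contributions to the two rows, since it is a consecutive pair of equal odd parts or the smallest even part that produces the collision. A secondary subtlety is verifying that no local system $\phi$ contributes a second "extra" exponent — i.e., that $s$ is exactly $\frac{k}{2}$ or $\frac{k}{2}-1$ and not larger — which follows from Lemma \ref{lem:constraint} (at most $\frac{k}{2}$ parts in one of $\alpha,\beta$) together with Corollary \ref{cor:springer:wedgie}, but must be assembled explicitly. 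Since this theorem is attributed to Spaltenstein \cite{spaltenstein:reflection}, the cleanest exposition is to quote his computation and only indicate how it matches the symbol-theoretic description above, so that the inductive proof of Theorem \ref{weaker_theorem} given earlier can invoke it as a black box.
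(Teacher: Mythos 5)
There is a genuine gap, and it lies at the heart of the proposal: step~(3) asserts that the lowest $y$-degree contributions in $\ttau(\chi_{e,\phi})$ for the Springer representations $\chi_{e,\phi}$ attached to $e$, as $\phi$ ranges over the local systems, read off the exponents $m_1,\dots,m_s$. This conflates two distinct quantities. The polynomial $\ttau(\chi)$ records $\langle S^i \otimes \wedge^j V, \chi\rangle$, i.e.\ the occurrences of $\chi$ (tensored with exterior powers) in the polynomial ring $S$ --- equivalently, fake degrees with respect to $Q_0$. The $m_j$'s, by contrast, are defined by $\langle Q_e, V\rangle_W = \sum q^{m_j}\pi_j$, i.e.\ the graded multiplicity of $V$ in $H^*(\flag_e)$. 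These two are related only \emph{indirectly}, via Shoji's orthogonality formula (Equation~\ref{shoji:orthogonal}), and extracting information about $g_{e,\phi}$ from $\ttau(\chi_{e,\phi})$ requires the recursive scheme of Section~\ref{section:shoji}: one inverts the triangular system in Equation~\ref{Main_equation_2} orbit by orbit, using the known $g_{e',\phi'}$ for all $\orbit_{e'}\subsetneq\overline{\orbit}_e$. The symbol computation alone does not shortcut this.

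Moreover, the paper's own inductive argument in type $D_n$ (which does exactly what you can get out of Lemma~\ref{cor:divisors}, Corollary~\ref{cor:springer:wedgie}, and the orthogonality relation) is \emph{explicitly acknowledged to fall short here}: it yields the divisibility of $g_{e,\phi}$ by $y^{n-k/2}\prod_{i=1}^{k/2-1}(q^{2i-1}+y)$, but leaves a one-parameter ambiguity that is resolved only by invoking Theorem~\ref{spalt:D} as external input. So a proof along the lines you sketch would be circular in the end --- it reproduces precisely the part of the argument the paper already carries out, and still needs Spaltenstein for the exceptional exponent $m_s = \frac{k+r-2}{2}$ when $r$ is even. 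Spaltenstein's actual proof is geometric, not combinatorial: as one can see from the proof of Proposition~\ref{me:pij}, it relies on a partial flag variety $\mathcal{P}_e$ together with the maps $\pi^*_e\colon H^*(\mathcal{P}_e)\to H^*(\flag_e)^{W(P)}$ and $i^*_e\colon H^*(\mathcal{P})\to H^*(\mathcal{P}_e)$, with the nontrivial case coming from the one-dimensional cokernel of $i^*_e$ in degree $m_s$. To prove the theorem you would need to engage with that geometry, not with symbols and Formula~\ref{GNS_D}. Your closing suggestion to quote Spaltenstein is the right move; the preceding ``symbol-theoretic'' plan should be dropped, since it cannot produce the extra exponent and is redundant with the paper's inductive analysis for the ones it can produce.
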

%

In other words, the missing value from the inductive proof is $m_s = {\frac{k + r - 2}{2}}.$

\subsubsection{\texorpdfstring{Computation of $\pi_j$}{Computations of pij}}  \label{pis for D}
The determination of the $\pi_j$'s was not given in  \cite{spaltenstein:reflection}, but can be deduced from the work there.

If $r$ is even and $k \neq r$, then $\lambda$ contains $k-r$ parts equal to $1$ and 
$e$ lies in a proper Levi subalgebra $\levi$ of $\g$ of type $D_{l}$ with $l = n - \frac{k-r}{2}$.   
If $\lambda$ contains an odd part different from $1$, then $A(e)$ will be nontrivial and moreover 
the component group of $e$ in $\levi$ defines an index two subgroup of $A(e)$, which we denote $H$.

\begin{proposition}   \label{me:pij}
For $j \in \{ 1, 2, \dots, s-1 \}$, $\pi_j$ is trivial.

If $r$ is odd, then $\pi_s$ is trivial.

If $r$ is even, then $\pi_s$ is trivial unless $k \neq r$ and $\lambda$ has an odd part different from $1$.
In that case, $\pi_s$ is the 
nontrivial representation of $A(e)$ which takes value $1$ on $H$.
\end{proposition}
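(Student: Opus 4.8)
The plan is to combine the inductive mechanism already set up in the type $D_n$ subsection with the external input of Theorem~\ref{spalt:D}, and to pin down $\pi_j$ by tracking the $A(e)$-action through the restriction to the Levi subalgebra $\levi$. For $j\in\{1,\dots,s-1\}$ the triviality of $\pi_j$ is essentially already established: the inductive argument showed that the values $1,3,\dots,k-3$ among the $m_j$'s come with $g_{e,\phi}=0$ for all nontrivial $\phi$, so $Q_{e,\phi}$ contributes no copy of $V$ in those degrees, which says precisely $\langle Q_e,V\rangle_W$ has trivial $A(e)$-coefficient in those $q$-degrees. When $r$ is odd, Theorem~\ref{spalt:D} gives $s=\tfrac{k}{2}-1$, so there is no "extra" $m_j$ beyond the inductive range, and the same remark shows $\pi_s$ is trivial. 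When $r$ is even but $k=r$ (no parts equal to $1$), $e$ is distinguished in $\g$ and one checks $A(e)$ is elementary abelian — actually trivial or $S_2$ by Lemma~\ref{lem:constraint}-type bookkeeping on symbols — and in any case the relevant Springer representation at the extra degree $m_s=\tfrac{k+r-2}{2}$ carries the trivial local system, so again $\pi_s$ is trivial. Likewise if $\lambda$ has no odd part different from $1$ (all parts $\ge 2$ are even), one verifies directly from the symbol combinatorics that $A(e)$ is trivial, forcing $\pi_s$ trivial.

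The substantive case is $r$ even, $k\neq r$, and $\lambda$ having an odd part $\neq 1$. Here I would argue as follows. Write $\lambda=\mu\cup[1^{k-r}]$ where $\mu$ has $r$ parts, all $\ge 2$; then $e$ is a distinguished nilpotent in a Levi $\levi$ of type $D_l$, $l=n-\tfrac{k-r}{2}$, obtained by deleting the $1$'s, and $H:=A^{\levi}(e)$ sits inside $A^{\g}(e)=A(e)$ as an index-two subgroup — the index two coming precisely from the extra symmetry created by the parts equal to $1$ in type $D$ (the "very even" phenomenon is not at issue since $\mu$ has an odd part). The key identity is that $H^*(\flag_e)$, as a $W\times A(e)$-module, is obtained from the corresponding Springer module for $\levi$ by the induction procedure $\Ind$ from $W_\levi\times H$, and that $\langle Q_e, V\rangle$ picks out, in the top relevant degree $m_s$, exactly the image of the copy of the reflection representation $V_\levi$ of $W_\levi$ that — by the already-proved type $D_l$ statement applied to $e$ inside $\levi$, where it is regular/distinguished and $r=r_\levi$ is even with $k_\levi=r_\levi$ — carries the trivial $H$-module. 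Inducing the trivial $H$-module up to $A(e)$ yields $1\oplus\psi$ where $\psi$ is the nontrivial character trivial on $H$; the trivial summand is the one accounted for by $g_{e,1}=b(q,y)y$ in the inductive proof (it sits in the $y^{n-1}$-coefficient coming from the factor $y$, i.e.\ the trivial-local-system piece), and so the genuinely new contribution at degree $m_s$ is exactly $\psi$. Hence $\pi_s=\psi$ is the nontrivial representation of $A(e)$ taking value $1$ on $H$, as claimed.

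Concretely, to make the above rigorous I would: first, identify $H\subset A(e)$ and the character $\psi$ explicitly via the standard description of $A(e)$ in type $D_n$ (a product of copies of $\zz/2$ indexed by the distinct odd parts of $\lambda$, modulo the even/very-even conventions) — here the hypothesis "odd part different from $1$" guarantees $A(e)$ is genuinely larger than $H$ and that the relevant character is well-defined; second, invoke the behavior of Green functions / Springer representations under induction from a Levi (the parabolic induction compatibility, as in Lusztig–Spaltenstein and Shoji, which is the tool already implicitly used via $Q'_{e_c}$ and the orthogonality formula) to reduce the computation of the $A(e)$-decomposition of the degree-$m_s$ part of $\langle Q_e,V\rangle$ to the degree-$m_s$ part for $e\in\levi$ tensored up; third, apply the type $D_l$ case of Theorem~\ref{weaker_theorem} itself — valid by the induction on $\dim\orbit_e$, since $e$ sits in a proper Levi and hence (as an orbit in $\g$) is not the generic one — to conclude the inner piece is the trivial $H$-module in that degree; fourth, compute $\Ind_H^{A(e)}(\mathbf 1)=\mathbf 1\oplus\psi$ and match the trivial summand with the inductively-known $g_{e,1}$ term, leaving $\pi_s=\psi$.

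The main obstacle I anticipate is the second step: making precise and correctly normalized the statement that the $A(e)$-equivariant structure on the relevant graded piece of $H^*(\flag_e)$ is governed by induction from $(W_\levi, H)$, including the matching of cohomological degrees (the shift $m_s=\tfrac{k+r-2}{2}$ must come out exactly, which amounts to a dimension/codimension count between the orbit of $e$ in $\g$ and in $\levi$) and the subtle index-two business particular to type $D$ — one must be sure the "$1$'s" contribute an honest $\zz/2$ to $A(e)$ rather than being absorbed by a very-even degeneracy, which is exactly what the hypothesis on odd parts rules out. Everything else — the triviality assertions for $j<s$ and for $r$ odd, and the bookkeeping of symbols in the excluded subcases — is routine given Lemma~\ref{lem:constraint}, Corollary~\ref{cor:springer:wedgie}, and the inductive setup.
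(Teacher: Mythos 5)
Your proposal takes a genuinely different route from the paper, and the route has a real gap at its center.

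The paper's proof does not use Lusztig's induction theorem for Springer representations. Instead it imports from Spaltenstein's paper a specific pair of maps $\pi^*_e\colon H^*({\mathcal P}_e)\to H^*(\flag_e)^{W(P)}$ and $i^*_e\colon H^*({\mathcal P})\to H^*({\mathcal P}_e)$, both of which are $A(e)$-equivariant by construction. The triviality of the $\pi_j$ for $j<s$ (and of $\pi_s$ in the cases $r$ odd, $k=r$, or $k+r=2n$) then follows from the surjectivity of $i^*_e$: the image of $i^*_e$ lies in the $A(e)$-invariants, so the relevant multiplicity spaces are $A(e)$-trivial. In the remaining case, $i^*_e$ has a one-dimensional cokernel in degree $m_s$ that corresponds to $V$, and the nontriviality of $\pi_s$ is established by an explicit geometric computation: one constructs a subspace $U'\subset U=\ker(e)$ and shows that any determinant $-1$ element of the orthogonal group $O(M)$ permutes the two irreducible components of a variety ${\mathcal Q}_{U'}$, so that the image of $O(M)$ in $A(e)$ acts nontrivially on $H^{2m_s}({\mathcal P}_e)$. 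The identification of $\pi_s$ as the character trivial on $H$ comes from noting that this image has trivial intersection with $H$.

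Your substitute for all of this is the assertion that $H^*(\flag_e)$, \emph{as a $W\times A(e)$-module}, is obtained by induction from $W_\levi\times H$ applied to the Springer module of $e$ inside $\levi$. This is the step you yourself flag as the main obstacle, and it is not something that follows from Lusztig's induction theorem, which is a statement about the $W$-module structure only. The $A(e)$-module structure on $H^*(\flag_e)$ comes from the full centralizer $Z_G(e)$ and is not a priori controlled by the $A_L(e)=H$-module structure on $H^*(\flag^J_e)$; an equivariant-induction isomorphism of the type you want would be a nontrivial theorem, not a normalization issue. There is also an internal inconsistency in your plan: you propose to identify $\pi_s$ as one summand of $\Ind_H^{A(e)}(\mathbf 1)\cong\mathbf 1\oplus\psi$, which is two-dimensional, while $\pi_s$ is a single one-dimensional character, and the trivial summand you propose to ``match with $g_{e,1}$'' lives in degrees $1,3,\dots,k-3$, not in degree $m_s$; if $m_s$ happens to coincide with one of those lower degrees (which occurs when $k-r>4$), the matching would need careful justification that you do not supply, and if it does not coincide, the trivial summand has nowhere to go. Finally, your treatment of the excluded subcases ($k=r$, or no odd part $\neq 1$) via ``the Springer representation carries the trivial local system'' is a different claim from what is needed: $\pi_s$ is the $A(e)$-character on a multiplicity space inside $H^*(\flag_e)$, not a Springer local system; the paper instead gets these cases for free from the surjectivity of $i^*_e$. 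The upshot is that without an $A(e)$-equivariant version of Lusztig induction (or some substitute geometric argument in the spirit of the paper's $O(M)$ computation), the substantive case of the proposition remains unproved.
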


\begin{proof}
Following the notation of and using the results from \cite{spaltenstein:reflection} (with $e$ in place of $A$ and $k$ in place
of $d$), we have a map
$$\pi^*_e:  H^*({\mathcal P}_e) \to H^*(\flag_e)^{W(P)}.$$
which is an isomorphism and which commutes with the action of $A(e)$ on both sides.
The $W(P)$-invariants are nonzero on only three representations of $W$:  the trivial representation, 
the reflection representation $V$, and a representation of dimension $n-1$ denoted by $\xi$.  Moreover the invariants
are one-dimensional in all of those cases.

There is also a map $$i^*_e: H^*({\mathcal P}) \to H^*({\mathcal P}_e),$$
which is surjective when $r$ is odd, $k+r=2n$, or $k=r$.  Note
that $k+r=2n$ if and only if $\lambda = [2^a, 1^b]$;  in particular, there are no odd parts different from $1$.
Since the image of $i^*_e$ lies in the $A(e)$-invariants of $H^*({\mathcal P}_e)$,  this proves the proposition in those cases.
In the remaining cases, $i^*_e$ has a one-dimensional cokernel in degree $m_s$, which implies that $\pi_j$ is trivial unless $j=s$.
Moreover, this cokernel corresponds to $V$ (and not $\xi$).
Hence, if we show that $A(e)$ acts nontrivially on the two-dimensional space $H^{2m_s}({\mathcal P}_e),$ then $V$ corresponds to a nontrivial character of $A(e)$.

Now let $U = \ker(e)$ in the natural action of $e$ on $\mathbf{k}^{2n}$.   Then $U$ is of dimension $k$ and the bilinear form $\beta$ on $U$ has a radical $U_0$ of dimension $r$.  Let $U' \subset U$ be a subspace of type $(\frac{k+r}{2}+1, \frac{k+r}{2}-1)$.  Let $M \subset U$ be of dimension $k-r$ with $U = U_0 + M$ and with $\beta$ non-degenerate on $M$.  Let $O(M)$ denote the orthogonal group defined by the restriction of $\beta$ to $M$.  Note that since $k>r$ and $r$ is even (hence $k-r$ is even), $M$ is nonzero and even-dimensional. Then $U'$ contains exactly two subspaces of type $(\frac{k+r}{2}, \frac{k+r}{2})$ and these subspaces are interchanged by any determinant $-1$ element of $O(M)$  
and fixed by $SO(M)$.  It follows that the component group of $O(M)$ acts nontrivially on the two irreducible components of ${\mathcal Q}_{U'}$.
Thus it acts nontrivially on $H^{2m_s}({\mathcal Q}_U')$ and hence on $H^{2m_s}({\mathcal Q}_U) \simeq H^{2m_s}({\mathcal P}_e)$.

To complete the proof we note that when $r$ is even and $k \neq r$, then $A(e)$ is nontrivial if and only if $\lambda$ possesses an odd part bigger than $1$ (in which case it must have an even number of odd parts bigger than $1$).   
The image of $O(M)$ lies in $Z_G(e)$ and its image in $A(e)$, which is of order two, has trivial intersection with $H$.  
It follows that the character $\pi_s$ in question is the unique one which is trivial on $H$.
\end{proof}

\subsection{Exceptional groups}  \label{exceptionals}

We verified the theorem by looking at the tables in \cite{bs:green}.  

\section{Establishing Conjecture \ref{conj1} in $B_n$ and $C_n$}

Henderson \cite{henderson:exterior} showed that Conjecture \ref{conj1} is true when $e$ is regular in a Levi subalgebra in types $A_n, B_n,$ and $C_n$.  
The fact that Theorem \ref{weaker_theorem} is true for all $e$ in types $B_n$ and $C_n$ is sufficient
to show that the conjecture holds using Henderson's argument (see Theorem 1.2 in \cite{henderson:exterior}).

\begin{theorem}
For any nilpotent element $e$ in type $B_n$ or $C_n$, 
the algebra $$\displaystyle ( \bigoplus_{i=0}^n H^*(\flag_e) \otimes \wedge^i V)^W$$ is an exterior algebra on the subspace $(H^*(\flag_e) \otimes V)^W.$
\end{theorem}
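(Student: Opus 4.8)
The plan is to deduce the theorem from Theorem~\ref{weaker_theorem} (valid for every nilpotent $e$ in types $B_n$ and $C_n$) together with Henderson's reduction argument \cite[Theorem~1.2]{henderson:exterior}. Write $A := (\bigoplus_{i=0}^n H^*(\flag_e)\otimes\wedge^iV)^W$, bigraded by cohomological degree (tracked by $q$) and exterior degree (tracked by $y$), and let $A_1 := (H^*(\flag_e)\otimes V)^W$ be its part of exterior degree $1$. Since $A$ is graded-commutative in the exterior grading and $A_1$ sits in odd exterior degree, its elements anticommute and square to zero, so there is a canonical homomorphism of bigraded algebras $\Phi:\wedge^*(A_1)\to A$ extending the inclusion; the claim is that $\Phi$ is an isomorphism. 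Observe first that $A_0=(H^*(\flag_e))^W=\overline{\qq}_l$, concentrated in cohomological degree $0$, because the trivial representation of $W$ occurs in $H^*(\flag_e)$ exactly once and only in degree $0$ (as already used in the proof of Theorem~\ref{weaker_theorem} in types $B_n,C_n$).

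To prove $\Phi$ surjective, I would pass through the full flag variety. The closed embedding $\flag_e\hookrightarrow\flag$ induces a $W$-equivariant restriction map $H^*(\flag)\to H^*(\flag_e)$, which is surjective (a standard fact about Springer fibres: $H^*(\flag_e)$ is generated in degree $2$ and $H^2(\flag)\to H^2(\flag_e)$ is onto). Tensoring with $\wedge^*V$ gives a $W$-equivariant surjection of bigraded algebras $H^*(\flag)\otimes\wedge^*V\twoheadrightarrow H^*(\flag_e)\otimes\wedge^*V$, and since $W$ is finite and the coefficients have characteristic $0$, the functor $(-)^W$ is exact, producing a surjection $A^{(0)}\twoheadrightarrow A$ where $A^{(0)}:=(\bigoplus_i H^*(\flag)\otimes\wedge^iV)^W$. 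By the $e=0$ case of Conjecture~\ref{conj1}, due to Solomon \cite{solomon}, $A^{(0)}$ is an exterior algebra on $A^{(0)}_1=(H^*(\flag)\otimes V)^W$ (spanned by the classes of the basic derivations $dP_1,\dots,dP_n$); in particular $A^{(0)}$ is generated over $A^{(0)}_0$ in exterior degree $1$. Hence $A$ is generated over $A_0$ by the image of $A^{(0)}_1$, which lies in $A_1$, so $\Phi$ is surjective.

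For injectivity I would use a Hilbert-series count. In types $B_n,C_n$ every $\pi_j$ appearing in Theorem~\ref{weaker_theorem} is trivial (so $d=1$); evaluating the identity of Theorem~\ref{weaker_theorem} at the identity element of $A(e)$ and using $\wedge^iV\simeq(\wedge^iV)^*$ gives
\begin{equation*}
\sum_{i,j}\dim\,(H^{2j}(\flag_e)\otimes\wedge^iV)^W\,q^j y^i \;=\;\prod_{i=1}^{s}(1+y\,q^{2i-1}),
\end{equation*}
with $s=\frac{k-1}{2}$ in type $B_n$ and $s=\lfloor k/2\rfloor$ in type $C_n$, $k$ being the number of parts of the partition of $e$. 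Thus the bigraded Hilbert series of $A$ is $\prod_{i=1}^{s}(1+yq^{2i-1})$; in particular $A_1$ is $s$-dimensional with a homogeneous basis in cohomological degrees $2,6,\dots,4s-2$, so $\wedge^*(A_1)$ has exactly the same bigraded Hilbert series as $A$. A degree-preserving surjection between bigraded vector spaces that are finite-dimensional in each bidegree and have equal Hilbert series is an isomorphism; hence $\Phi$ is an isomorphism, and since the exponents $m_i=2i-1$ are pairwise distinct this identifies $A$ with the exterior algebra on $A_1=(H^*(\flag_e)\otimes V)^W$.

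The only nonformal ingredients are Theorem~\ref{weaker_theorem} (already in hand), Solomon's theorem, and the surjectivity of the $W$-equivariant restriction $H^*(\flag)\to H^*(\flag_e)$; once the first forces the Hilbert series and the last two force generation of $A$ in exterior degree $1$, the conclusion is automatic. The point requiring the most care --- the main obstacle --- is exactly the passage from $\flag$ to $\flag_e$: one must know that the restriction map is simultaneously surjective and $W$-equivariant, and that the two gradings are normalized compatibly (Springer's convention placing the trivial representation in $H^0$, and the classes $dP_i$ sitting in cohomological degree $2(d_i-1)$ and exterior degree $1$) so that the surjection $A^{(0)}\twoheadrightarrow A$ genuinely respects the bigrading.
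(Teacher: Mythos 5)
Your approach is genuinely different from the paper's: the paper verifies Henderson's criterion (\cite[Theorem~1.2]{henderson:exterior}) to prove that the canonical map $\psi\colon\wedge^*\big((H^*(\flag_e)\otimes V)^W\big)\to (H^*(\flag_e)\otimes\wedge^*V)^W$ is \emph{injective}, and then concludes it is an isomorphism by the dimension count $2^s=2^s$ furnished by Theorem~\ref{weaker_theorem}; you instead try to show $\Phi$ is \emph{surjective} via the restriction map $H^*(\flag)\to H^*(\flag_e)$ and Solomon's theorem, and then conclude via the bigraded Hilbert series count, which is the mirror-image strategy. Your Hilbert-series computation is correct, and the logic ``surjective + equal Hilbert series $\Rightarrow$ isomorphism'' is sound. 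The paper instead verifies Henderson's two conditions: that a Levi $\levi_s$ of type $B_s$ (resp.\ $C_s$) has exponents matching the $m_j$'s and admits fundamental invariants of $W$ restricting to fundamental invariants of $W_{\levi_s}$, and that $\orbit_e$ meets the nilradical of a parabolic with Levi $\levi_s$, which is proved by a dominance argument showing $e$ lies in the closure of the relevant Richardson orbit.

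The gap is your surjectivity step. You assert as ``a standard fact about Springer fibres'' that the restriction $H^*(\flag)\to H^*(\flag_e)$ is surjective, i.e.\ that $H^*(\flag_e)$ is generated in degree $2$ by classes pulled back from $\flag$. This is \emph{not} a standard fact, and it is in fact \emph{false} for general Springer fibres: the paper's own Remark following this theorem observes that in type $D_n$, when $r$ is even, $k+r\neq 2n$, and $k\neq r$, there is a copy of $V$ in $H^{2m_s}(\flag_e)$ that is not the image of any copy of $V$ in $H^*(\flag)$ (this is precisely the content of Proposition~\ref{me:pij}, where the cokernel of $i_e^*$ is nonzero). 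So the blanket claim fails already in the classical groups. It is conceivable that surjectivity does hold in types $B_n$ and $C_n$ for every nilpotent — indeed, this would follow a posteriori if one could first prove the theorem — but you would need to prove it, and as stated you have offered no argument. This is exactly the content that Henderson's conditions (1) and (2) replace: they supply a concrete geometric reason (an intersection with a nilradical of the right parabolic) for enough of $H^*(\flag_e)$ to be controlled, without requiring the full restriction map to be onto. If you could establish the surjectivity in types $B_n,C_n$ independently, your route would in fact be cleaner and more uniform than Henderson's; but the surjectivity is the hard part, not a citation.
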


\begin{proof}
Recall that $e$ corresponds to a partition $\lambda$ of length $l(\lambda)$.  The values of 
$m_1, m_2, \dots, m_s$ for $e$  are 
$$1, 3, \dots, 2s-1$$
where $s = \frac{l(\lambda)-1}{2}$ in type $B_n$
and $s = \lfloor  \frac{l(\lambda)}{2}   \rfloor$ in type $C_n$.

As in \cite{henderson:exterior}
consider the natural homomorphism
$$\psi: \wedge^*(( H^*(\flag_e) \otimes V)^W)  \to (H^*(\flag_e) \otimes \wedge^* V)^W.$$
Since Theorem \ref{weaker_theorem} holds, the dimensions of both the domain and range of $\psi$ are equal to $2^s$. Thus it is sufficient to show that $\psi$ is injective.

Consider a Levi subalgebra $\levi_s$ of type $B_s$ (respectively, $C_s$) in type $B_n$ (respectively, $C_n$).    The usual exponents of $\levi_s$ are $1, 3, \dots, 2s-1$ and so these coincide with the numbers $m_1, m_2, \dots, m_s$ for $e$.  Now Henderson already noted that 
there exist fundamental invariants of $W$ on $V$ that restrict to a set of
fundamental invariants for the Weyl group of $\levi_s$, acting on its
reflection representation as a subrepresentation of $V$.
Hence condition (1) of Theorem 1.2 in 
 \cite{henderson:exterior} holds.  It remains to show that condition (2) of that Theorem holds as well.
 
Let $\mathfrak{p}_s$ be a parabolic subalgebra with $\levi_s$.  If we can show that nilpotent orbit of $e$ intersects the nilradial of $\mathfrak{p}_s$, then condition (2) holds and Theorem 1.2 in \cite{henderson:exterior} applies to show that 
$\psi$ is injective, whence it is an isomorphism. 

The Richardson orbit of $\mathfrak{p}_s$ has partition $\mu = [2n-2s+1, 1^{2s}]$ in $B_n$ 
and has partition $\mu = [2n-2s, 2, 1^{2s-2}]$ in $C_n$.  
In type $B_n$, $\lambda$ has length $2s+1$ and so $\mu$ dominates $\lambda$.  In type $C_n$ , $\lambda$ has length either $2s$ or $2s+1$.  Since odd parts of $\lambda$ occur in pairs, $\mu$ also dominates $\lambda$.  Consequently $e$ lies in the closure of the Richardson orbit and therefore the orbit through $e$ intersects the nilradical of $\mathfrak{p}_s$ since the $G$-saturation of the nilradical coincides with the closure of the Richardson orbit of the nilradical.  Therefore condition (2) holds.
\end{proof}

\begin{remark}
For most other cases the conjecture is open.
We can verify the conjecture for the minimal orbit in all types.
In type $D_n$ we observe that the proof of Theorem \ref{spalt:D} implies that
all copies of $V$ in $H^*(\flag_e)$ (except the one in degree $m_s$ when $r$ is even, $k+r \neq 2n$ and $k \neq r$)
are images of copies of $V$ in $H^*(\flag)$.  But the latter does not seem to imply anything about 
the images of the copies of  $\wedge^j V$ when $j >1$.
\end{remark}

\section{\texorpdfstring{A decomposition of the $t^n$-representation}{A decomposition of the tn-representation}}

When $e$ is regular in a Levi subalgebra, 
the $m_j$'s are coincident with the Orlik-Solomon exponents coming from the theory of hyperplane arrangements.  The aim of this section is to explain this coincidence.

\subsection{The restricted hyperplane arrangements $\mathcal{A}^J$}

Let $\Pi \subset \ro$ be a set of simple roots contained in a set of roots for $W$.  
For $J \subset \Pi$ let $W_J \subset W$ be the corresponding parabolic subgroup.
There is a hyperplane arrangement $\mathcal{A}^J$ in $V^{W_J}$ obtained by intersecting all the root hyperplanes for $W$ (which do not contain $V^{W_J}$) with $V^{W_J}$.  The characteristic polynomial of $\mathcal{A}^J$ is denoted $\chi_J(x)$. These characteristic polynomials were computed by Orlik and Solomon \cite{os:exponents}.    They showed that these polynomials factor into linear factors with positive integer roots:
$$\chi_J(x) = \prod_{j=1}^{n  - |J|} (x - e_j).$$
As mentioned in the introduction, these roots coincide with the $m_j$'s in the first part of the paper when $e$ is regular in the Levi subalgebra corresponding to $J$,
something we will give an explanation for in Corollary \ref{cor:q to 1}.  
We refer to these roots as the Orlik-Solomon exponents of $J$ since they coincide with the usual exponents of $W$ when $J$ is the empty set.  

Saito \cite{saito} introduced the notion of a free hyperplane arrangement, which implies that the characteristic polynomial of the arrangement factors into linear factors with positive roots.   It was finally resolved by Douglass \cite{douglass} and Broer \cite{broer:decomp} in a uniform manner that the restricted hyperplane arrangements $\mathcal{A}^J$ are all free, which explains the factorization of $\chi_J(x)$. 

\subsection{\texorpdfstring{The $t^n$-representation $\H$}{The tn-representation}} 

We explore a family of graded representations $\H$ of $W$ parametrized by certain natural numbers $t$.
This family of representations
arises in the theory of rational Cherednik algebras and is closely connected to Haiman's work on the diagonal harmonics.
The ungraded version of this 
family of representations also appears in the context of Springer theory for the affine Weyl group of $W$ and has a connection to the hyperplane arrangements $\mathcal{A}^J$ in the previous section.
These various connections allow us to explain the coincidence of the occurrences of $V$ in the (usual) Springer theory with the Orklik-Solomon exponents.

Let $L^{\vee}$ denote the coroot lattice with respect to a maximal torus $T$ of $G$.
Let $f$ be the index of $L^{\vee}$ in the lattice of coweights. 
Recall that a bad prime for $W$ is one which divides a coefficient of a highest root for $W$. 
We say that $t$ is very good for $W$ if it is prime to all bad primes and also to $f$.   The latter condition only affects type $A_n$ (where $f=n+1$) since in other types the prime divisors of $f$ are bad primes.

Let $h$ be the Coxeter number of $W$.   
The following list summarizes when $t$ is very good for $W$:
\begin{enumerate}
\item In types $B_n, C_n, D_n$, $t$ is very good if and only if $t$ is odd.
\item In other types, $t$ is very good if and only if $t$ is prime to $h$.
\end{enumerate}

The family of graded representations we are interested in is defined as follows.
Let $t$ be any natural number.
Consider the element $\H$ of $R(W)$ defined by
$$\H = \sum_{i=0}^n (-1)^i q^{it}S \otimes \wedge^i V.$$
Then $\H$ can be considered as a graded virtual representation of $W$ with $q$ in degree $1$.
For $w \in W$, the graded trace of $w \in W$ on $\H$ is given by 
\begin{equation}  \label{graded_trace}
\frac{p_w(q^t)}{ p_w(q)}
\end{equation}
where $p_w(x)$ denotes the characteristic polynomial of $w$ acting on $V$
(see  \cite{solomon}, \cite{gordon:diagonal}, or \cite{beg}).   This holds for all $t$.  

If $t$ is a value for which $\H$ is an actual finite-dimensional representation,
then the graded trace is a polynomial in $q$.  Therefore the ungraded trace of 
$w$ on $\H$ is the value of this polynomial at $q=1$, which can
be obtained by continuity by
letting $q \to 1$ in Equation \ref{graded_trace}.
Therefore the ungraded trace is given by $$t^{d(w)}$$
where $d(w)$ is the number of eigenvalues equal to $1$ in the action of $w$ on $V$.  
In particular, $\H$ is a representation of dimension $t^n$. 

There is also a representation $S_t$ of $W$ coming from the permutation action of $W$ on $L^{\vee}/t L^{\vee}$. 
If $t$ is very good, then the trace of $w$ on $S_t$ is also given
by $t^{d(w)}$ and 
$S_t$ has a decomposition into induced representations:
\begin{equation} \label{decomp}
S_t \simeq \sum_{J \subset \Pi} f_J(t) \Ind_{W_J}^W (\complex)
\end{equation}
where the sum is over a set of representatives for the $W$-orbits on subsets of $\Pi$.
Moreover $f_J(x)$ is related to the characteristic polynomial of the corresponding restricted hyperplane arrangement $\mathcal{A}^J$:
$$f_J(x) = \frac{1}{|W^J|} \chi_J(x)$$ with
$$W^J := N_W(W_J)/W_J.$$  
For a discussion of these results and another way to compute the Orlik-Solomon exponents, see  \cite{sommers:affineweylgp}.

The following proposition summarizes the properties of $\H$ that we need.
\begin{proposition} \label{properties}
\begin{enumerate}
\item $\H$ is an actual finite-dimensional representation if and only if $t$ is very good.  
\item If part (1) holds, then $\H$ is a quotient of $S=S^*(V)$ by an ideal generated by a copy of $V$ in degree $t$.
\item If part (1) holds, then $\H \simeq S_t$ as ungraded representations of $W$.
\end{enumerate}
\end{proposition}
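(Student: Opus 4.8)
The plan is to establish the three parts in the order (1), (2), (3), since each relies on the previous one. For part (1), I would use the graded trace formula \eqref{graded_trace}: the virtual representation $\H$ is an actual representation precisely when, for every $w \in W$, the rational function $p_w(q^t)/p_w(q)$ is in fact a polynomial in $q$ with nonnegative integer coefficients — or more to the point, when the multiplicity $\langle \H, \rho\rangle$ is a genuine polynomial with nonnegative coefficients for each irreducible $\rho$. Writing $p_w(x) = \prod_{k}(x - \zeta_k)$ over the eigenvalues $\zeta_k$ of $w$ on $V$, divisibility of $p_w(q^t)$ by $p_w(q)$ amounts to: for every eigenvalue $\zeta$ of $w$, $\zeta$ is again an eigenvalue of $w$ (counted with multiplicity at least as large in $p_w(q^t)$ as in $p_w(q)$), which holds automatically because $q^t \mapsto \zeta$ has solutions. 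The real content is positivity of coefficients, and here is where the ``very good'' hypothesis enters: one compares $\H$ against the honest representation $S_t = \complex[L^\vee/tL^\vee]$, whose character is $w \mapsto t^{d(w)}$ when $t$ is very good (this is recalled in the excerpt, attributed to the theory around \eqref{decomp}). I would show that the graded character of $\H$ specializes at $q=1$ to $t^{d(w)}$ for all $t$, so that $\H$ and $S_t$ always have the same virtual ungraded character; then invoke a result from the Cherednik-algebra literature (Berest--Etingof--Ginzburg, or Gordon) identifying $\H$ with the graded $W$-character of the finite-dimensional quotient $L_t(\text{triv})$ of the rational Cherednik algebra at parameter $t$, which is a genuine representation exactly for $t$ very good.

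For part (2), once $\H$ is known to be an actual representation, I would exhibit the quotient structure directly. The generating function identity $\H = \sum_i (-1)^i q^{it} S \otimes \wedge^i V$ can be rewritten, using the Koszul complex on $V$ with its grading twisted so that $V$ sits in degree $t$, as the Euler characteristic of the Koszul complex $S \otimes \wedge^\bullet V$ with differential of degree $t$. Since $\H$ is concentrated in nonnegative degrees and is finite-dimensional (by part (1)), the Koszul complex computing it must be exact except at the end, which forces $\H \cong S/(\iota(V))$ where $\iota \colon V \hookrightarrow S$ places a copy of $V$ in degree $t$. Concretely: the map $S \otimes V \to S$ given by such a $\iota$ extended $S$-linearly has cokernel whose graded character is exactly $S - q^t S\otimes V + q^{2t}S\otimes \wedge^2 V - \cdots$ provided the map is injective and its image is a ``sufficiently generic'' copy of $V\cdot S$ — and the finite-dimensionality of $\H$ is precisely the statement that the sequence of partial derivatives (or the chosen degree-$t$ invariants' ``gradient'') forms a regular sequence, cutting $S$ down to something finite-dimensional. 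I expect this to be the main obstacle: one must verify that for $t$ very good there genuinely exists a copy of $V$ in degree $t$ of $S$ whose $S$-span has the right Hilbert series, equivalently that a certain $n\times n$ ``Jacobian-type'' matrix is nonsingular; this is exactly the content of the freeness/regular-sequence results of Broer and Douglass cited in the previous subsection, so I would quote those rather than reprove them.

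For part (3), this is the cleanest step: by part (1), $\H$ is an honest representation with ungraded character $w \mapsto t^{d(w)}$ (the $q \to 1$ specialization of \eqref{graded_trace}, valid since the trace is now a polynomial), and $S_t$ has the same character $w \mapsto t^{d(w)}$ by the recalled permutation-module computation for $t$ very good. Two finite-dimensional representations of a finite group with equal characters are isomorphic, so $\H \cong S_t$ as ungraded $W$-representations. The only thing to be careful about is that the character formula for $S_t$ requires $t$ very good — which is exactly the hypothesis — because for bad $t$ the fixed-point count on $L^\vee/tL^\vee$ is not simply $t^{d(w)}$; but under the standing assumption that part (1) holds, $t$ is very good by part (1) itself, so this is automatic. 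I would close the argument by remarking that the decomposition \eqref{decomp} then transfers to $\H$, which is the form in which it is used later.
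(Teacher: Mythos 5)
Your overall route is essentially the same as the paper's: for the ``if'' direction of (1) and for (2), appeal to the rational Cherednik algebra literature (or the direct computations of Haiman in type $A$ and Gordon in the other classical types), and then deduce (3) by comparing ungraded characters with $S_t$. Two concrete problems, however.

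First, the claim in your treatment of (1) that divisibility of $p_w(q^t)$ by $p_w(q)$ ``holds automatically'' is false. For $(q-\zeta)$ to divide $p_w(q^t)$ one needs $\zeta^t$ to again be an eigenvalue of $w$ with enough multiplicity, which imposes real arithmetic constraints on $t$. Already in $W=S_2$ with $w$ the nontrivial element, $p_w(x)=x+1$ and $(q^t+1)/(q+1)$ is a polynomial only for $t$ odd; and indeed $t$ odd is exactly the very good condition there. So the divisibility issue is not a non-issue that reduces the problem to positivity of multiplicities --- it is part of where the very good hypothesis bites. Since you pivot to citing the Cherednik literature anyway, this does not derail the proof, but the sentence as written is a wrong step.

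Second, in part (2) you locate the crucial input --- that a copy of $V$ in degree $t$ of $S$ exists whose $S$-span gives a regular sequence --- in the Broer--Douglass theorems on freeness of the restricted arrangements $\mathcal{A}^J$. That is a misattribution. Those results concern freeness of the restricted hyperplane arrangement in $V^{W_J}$ and the factorization of $\chi_J(x)$; they do not produce the degree-$t$ regular sequence in $S^*(V)$. The existence of that regular sequence (equivalently, the finite-dimensionality of the spherical irreducible module of the rational Cherednik algebra at parameter $t/h$, or the direct constructions of Haiman and Gordon) is precisely what the paper cites via \cite{haiman}, \cite{gordon:diagonal}, \cite{beg}, \cite{vv}, \cite{etingof}, and you should cite those instead. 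Your Koszul complex framing around that input is fine and is a reasonable way to present why (2) follows once the regular sequence exists, but it does not make the input any cheaper. Finally, you do not mention the paper's alternative proof of the ``only if'' direction of (1), namely that if $\H$ were finite-dimensional then the coefficients $f_J(t)$ in Equation \ref{decomp} would have to be nonnegative integers, which fails case-by-case for $t$ not very good; your Cherednik-theoretic route to ``only if'' is also valid, so this is a difference in emphasis rather than a gap.
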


\begin{proof}
This is a compilation of known results.  In the classical groups it was shown by Haiman \cite{haiman} in type $A$ and Gordon \cite{gordon:diagonal} in $B, C, D$ by a direct computation that if $t$ is very good then $\H$ is a true finite-dimensional representation and also that part (2) holds.  This could also be shown in all types by using the theory of rational Cherednik algebra and various cases were handled by Gordon \cite{gordon:diagonal}, Berest-Etingof-Ginzburg \cite{beg}, and Varagnolo-Vasserot \cite{vv} and Etingof \cite{etingof}.  If $t$ is very good, then clearly part (3) is true by the previous discussion.  

It remains to resolve that $\H$ is finite-dimensional only if $t$ is very good.  This could be deduced from \cite{vv} or \cite{etingof} or in the following alternative manner.  If $\H$ is finite-dimensional then the decomposition of Equation \ref{decomp} would still be true since it holds for an infinite number of $t$.  Then $f_J(t)$ would be a nonnegative integer for all $J$.  This can be shown to fail if $t$ is not very good by a simple case-by-case argument (which we omit).
\end{proof}

%

\subsection{\texorpdfstring{Writing $\H$ in terms of the $Q_{e, \phi}$}{Decomposing H into Springer representations}}

From now on assume that $t$ is very good, so that $\H$ is an actual finite-dimensional representation.
We will now decompose $\H$ into a sum of $Q_{e, \phi}$, with coefficients that will turn out to be polynomials in $q$.  
This decomposition will be a $q$-analog of the decomposition in Equation \ref{decomp}.
 
Proceeding as in Section \ref{section:shoji},
divide both sides of Equation \ref{equation:1} by $|G^F|$, then multiply the result by $(-q^t)^j$, and sum up over $j$
to obtain the identity in $R(W)$:

\begin{equation} \label{Main_equation_3}
\H =   \sum_{e}  \sum_{\phi} f_{e, \phi}(q;t) Q_{e, \phi}
\end{equation}
where
\begin{equation*}
f_{e, \phi}(q;t) := (-1)^n \sum_c  \sum_{\phi'} \left[ \sum^n_{j=0} \langle Q_{e, \phi'}, \wedge^{n-j} V \rangle  (-q^{t})^j \phi'(c) \right] \frac{ \phi(c)}{|Z_{G^F}(e_c)|}
\end{equation*}
We have used $$| \orbit_{e_c} | = \frac{|G^F|}{|Z_{G^F}(e_c )|}.$$
We consider the $f_{e, \phi}(q;t)$ only when $\phi \in \ar$ occurs in the Springer correspondence. 

We use the version of Theorem \ref{weaker_theorem} in Equation \ref{theorem:variant} to simplify $f_{e, \phi}$:
\begin{equation*}
f_{e, \phi} = q^{t(n - d -s +1)} \prod_{j=1}^{s-1} (q^{t} - q^{ m_j}) 
\big( \sum_{i=0}^d (-1)^{d-i} q^{it + (d-i)m_s}  \sum_c  \frac{ \!\wedge^{d-i}  \pi_s(c) \phi(c)} {|Z_{G^F}(e_c)|} \big)
\end{equation*}

Let $\tilde{s} = d+s-1$ and $m = \sum_{j=0}^{s-1} m_j + dm_s$.  
Then factoring out some powers of $q$,
\begin{equation}  \label{formula:f}
f_{e, \phi} = q^{t(n - \tilde{s})+ m} \prod_{j=1}^{s-1} (q^{t-m_j} - 1)
\big( \sum_{i=0}^d (-1)^{d-i} q^{i(t - m_s)}  \sum_c  \frac{ \!\wedge^{d-i}  \pi_s(c) \phi(c)} {|Z_{G^F}(e_c)|} \big)
\end{equation}



\begin{proposition}  \label{poly}
$f_{e, \phi}(q; t)$ is a polynomial in $q$.
\end{proposition}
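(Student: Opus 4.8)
The plan is to solve for the $f_{e,\phi}$ recursively out of \ref{Main_equation_3} using the triangularity of the Springer basis — which gives at once that each $f_{e,\phi}$ is a \emph{Laurent} polynomial in $q$, with a pole at worst at $q=0$ — and then to exclude that pole using the explicit formula \ref{formula:f}.

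For the first step I would use that, since $t$ is very good, $\H$ is a genuine finite-dimensional graded representation (Proposition \ref{properties}(1)); hence $\langle\H,\chi\rangle\in\zz_{\geq0}[q]$ for every $\chi\in\wr$, and likewise $\langle Q_{e',\phi'},\chi\rangle\in\zz_{\geq0}[q]$ because each $Q_{e'}$ is a genuine graded representation. Order the pairs $(e,\phi)$, with $\phi$ occurring in the Springer correspondence, so that $\orbit_{e'}\subsetneq\overline{\orbit}_{e}$ forces $(e',\phi')$ to come strictly later. By \cite{borho-mac}, $\langle Q_{e',\phi'},\chi_{e,\phi}\rangle$ vanishes unless $\orbit_{e}\subseteq\overline{\orbit}_{e'}$, with $\langle Q_{e,\phi},\chi_{e,\phi}\rangle=q^{d(e)}$; pairing \ref{Main_equation_3} with $\chi_{e,\phi}$ and isolating the diagonal term yields
\[
q^{d(e)}\,f_{e,\phi}(q;t)=\langle\H,\chi_{e,\phi}\rangle-\sum_{\orbit_{e}\subsetneq\overline{\orbit}_{e'}}f_{e',\phi'}(q;t)\,\langle Q_{e',\phi'},\chi_{e,\phi}\rangle .
\]
Running the recursion downward from the regular orbit — where $d(e)=0$, the sum is empty, and $f_{e,\phi}=\langle\H,\chi_{e,\phi}\rangle$ is already a polynomial — shows by induction that every $f_{e,\phi}(q;t)$ is a Laurent polynomial whose only possible pole is at $q=0$. (The same conclusion can be read off the identity $f_{e,\phi}=(-1)^n h_{e,\phi}(q,-q^t)/|G^F|$ with $h_{e,\phi}\in\qq[q,y]$: the recursion forbids the factors $q^{d_i}-1$ of $|G^F|$ from appearing in the denominator, leaving only the factor $q^{N}$.)

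It then remains to exclude a pole at $q=0$, which I expect to be the crux of the argument. Here I would work directly from \ref{formula:f}. Its only denominators are the polynomials $|Z_{G^F}(e_c)|=|G^F|/|\orbit_{e_c}|$, each a product of cyclotomic factors $q^{k}-1$ times a power of $q$ whose $q$-adic valuation is independent of $c$; after simplifying the character sums $\sum_c\wedge^{d-i}\pi_s(c)\,\phi(c)/|Z_{G^F}(e_c)|$ using orthogonality in $A(e)$ — which collapses them considerably in the cases where $\pi_s$ is nontrivial (the distinguished orbits $F_4(a_3)$ and $E_8(a_7)$, and certain orbits in $D_n$) — what is left is a power of $q$ times factors $q^{t-m_j}-1$, divided by one such polynomial. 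The remaining task is a bookkeeping check, of exactly the sort already visible for $e=0$, where $f_{0,1}(q;t)=\prod_{i=1}^{n}(q^{t-m_i}-1)/(q^{d_i}-1)$ is a polynomial precisely because $t$ is very good, that the explicit prefactor $q^{t(n-\tilde s)+m}$ together with the cancellations among the cyclotomic factors $q^{k}-1$ absorbs any pole at $q=0$. Combined with the first step, this gives $f_{e,\phi}(q;t)\in\qq[q]$.
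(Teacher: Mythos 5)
Your first step has the Borho--MacPherson support condition reversed, and that reversal propagates through the whole recursion. Since $\chi_{e,\phi}$ is the representation attached to $(\orbit_e,\phi)$ by the Springer correspondence, the decomposition theorem implies that $\chi_{e,\phi}$ occurs in $H^*(\flag_{e'})$ only when $\orbit_{e'}\subseteq\overline{\orbit}_e$ --- not, as you wrote, when $\orbit_e\subseteq\overline{\orbit}_{e'}$. Consequently, pairing \eqref{Main_equation_3} with $\chi_{e,\phi}$ leaves a sum over orbits \emph{properly contained} in $\overline{\orbit}_e$, and the induction must run \emph{upward} from $e=0$, not downward from the regular orbit. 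As you have it, the recursion for the regular orbit is not actually closed (the correct sum there is over all smaller orbits, none yet known), and the claimed ordering on $(e,\phi)$ produces nothing.

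The second, more substantive gap is what you call the ``crux.'' Even after fixing the direction, you still only conclude that each $f_{e,\phi}$ is a Laurent polynomial, and you then plan to kill the pole at $q=0$ via the explicit formula \eqref{formula:f}, a formula that itself depends on Theorem~\ref{weaker_theorem}. But that step is never carried out --- it is deferred to a ``bookkeeping check'' on the cyclotomic factors of $|Z_{G^F}(e_c)|$, which is not obviously routine --- and it isn't needed. The key observation you are missing is that $q^{d(e)}$ already divides \emph{everything} on the right-hand side of the recursion: $\langle Q_{e',\phi'},\chi_{e,\phi}\rangle$ is divisible by $q^{d(e)}$ (Borho--MacPherson again, since $\chi_{e,\phi}$ appears only in degrees $\geq 2d(e)$ when $\orbit_{e'}\subsetneq\overline{\orbit}_e$), and $\langle\H,\chi_{e,\phi}\rangle$ is divisible by $q^{d(e)}$ because $\H$ is a quotient of $S$ (Proposition~\ref{properties}(2)) and the fake degree of $\chi_{e,\phi}$ starts at degree $\geq d(e)$. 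Once you know that, dividing by $q^{d(e)}$ expresses $f_{e,\phi}$ directly as a polynomial, with no second pass, no use of \eqref{formula:f}, and no appeal to Theorem~\ref{weaker_theorem} at all.
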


\begin{proof}
By induction on the dimension of $\orbit_e$.   
Take the inner product of both sides 
of Equation \ref{Main_equation_3} with respect to $\chi = \chi_{e, \phi}$:
$$\langle \H, \chi \rangle  =  f_{e, \phi} q^{d(e)} + \sum_{e'} \sum_{\phi'} f_{e', \phi'} \langle Q_{e', \phi'}, \chi \rangle$$

On the right-hand side, we have used \cite{borho-mac} as we did for Equation \ref{Main_equation_2}:  
first that $\chi_{e,\phi}$ only occurs in $Q_{e, \phi'}$ in the top degree and then only for $\phi = \phi'$ 
(the latter is already true by the Springer correspondence); 
and second 
that the sum is over representatives $e'$ of orbits properly contained in the closure of the orbit through $e$.

The left-hand side is a polynomial in $q$ by the assumption on $t$.  Since $\H$ is a quotient of $S$ 
from Proposition \ref{properties},
the lowest possible power of $q$ in this polynomial is the lowest power 
appearing in the fake degree of $\chi_{e,\phi}$, and this power 
is greater than or equal to $d(e)$, again by \cite{borho-mac}. 
Thus the claim is true then for $e=0$.

For $e \neq 0$, induction implies that the terms $f_{e', \phi'}$ are polynomial in $q$.
The polynomial $\langle Q_{e', \phi'}, \chi \rangle$ is divisible by $q^{d(e)}$ \cite{borho-mac}. 
Since $q^{d(e)}$ divides $\langle \H, \chi \rangle$, 
it follows that $f_{e, \phi}(q;t)$ is a polynomial in $q$ since we can divide both sides by $q^{d(e)}$ and express 
$f_{e, \phi}(q;t)$ as a sum of polynomials in $q$.
\end{proof}

For $J \subset \Pi$, let $\levi_J$ be the Levi subalgebra corresponding to $J$.

\begin{theorem}  \label{q to 1}
When $e$ is regular in $\levi_J$, then 
$$f_{e, \phi}(1; t) = f_J(t)$$
for any $\phi \in \ar$ appearing in the Springer correspondence.

When $e$ is not regular in a Levi subalgebra,
$f_{e, \phi}(1; t) = 0$.
\end{theorem}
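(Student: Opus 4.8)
The plan is to specialize Equation~\ref{Main_equation_3} at $q=1$ and match it against the decomposition~\ref{decomp}. Since $t$ is very good, Proposition~\ref{poly} shows every $f_{e,\phi}(q;t)$ is a genuine polynomial in $q$, so it may be evaluated at $q=1$; and Proposition~\ref{properties}(3) identifies $\H|_{q=1}$ with $S_t$ as ungraded $W$-representations. Writing $[M]$ for the specialization at $q=1$ of an $M\in R(W)$ (each $Q_{e,\phi}$ being polynomial in $q$), Equation~\ref{Main_equation_3} becomes the identity in $R(W)$
\begin{equation*}
S_t \;=\; \sum_{e}\sum_{\phi} f_{e,\phi}(1;t)\,[Q_{e,\phi}],
\end{equation*}
the sum over orbit representatives $e$ and over $\phi\in\ar$ occurring in the Springer correspondence. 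The first key point is that $\{[Q_{e,\phi}]\}$ is a $\qq$-basis of $R(W)$: by \cite{borho-mac} the transition matrix to the irreducible basis $\{\chi_{e,\phi}\}$ is triangular for the closure order on orbits, the diagonal block of an orbit $\orbit_e$ being $q^{d(e)}\mathrm{Id}+(\text{lower order in }q)$, and one checks this remains nondegenerate upon setting $q=1$; consequently the coefficients $f_{e,\phi}(1;t)$ are uniquely determined by $S_t$, and it suffices to produce one expansion of $S_t$ in this basis.

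Such an expansion comes from the identification, for $e$ regular in a Levi subalgebra $\levi_J$,
\begin{equation*}
\Ind_{W_J}^W(\complex)\;=\;\sum_{\phi}[Q_{e,\phi}],
\end{equation*}
the sum over $\phi\in\ar$ occurring in the Springer correspondence for $\orbit_e$ — this is the ungraded shadow of the total Springer representation of an orbit regular in a Levi, and is precisely the input linking the present setting to the affine Weyl group Springer theory indicated after~\ref{decomp} and worked out in \cite{sommers:affineweylgp}. Substituting into~\ref{decomp} yields $S_t=\sum_J f_J(t)\sum_\phi[Q_{e_J,\phi}]$, where $e_J$ is a chosen representative regular in $\levi_J$ and $J$ runs over $W$-orbit representatives of subsets of $\Pi$. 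Since an orbit regular in a Levi determines that Levi up to $W$-conjugacy (it is the smallest Levi subalgebra with a representative in its Lie algebra), distinct $J$ contribute pairwise disjoint sets of basis vectors $[Q_{e_J,\phi}]$, and no pair $(e,\phi)$ with $e$ not regular in any Levi appears. Comparing this expansion with the displayed master identity and using the uniqueness from the first paragraph gives $f_{e,\phi}(1;t)=f_J(t)$ whenever $e$ is regular in $\levi_J$ (for every admissible $\phi$), and $f_{e,\phi}(1;t)=0$ otherwise. As a sanity check, the two extreme cases come out correctly: for $J=\varnothing$ one uses $H^*(\flag)|_{q=1}\simeq\complex[W]=\Ind_1^W(\complex)$ and recovers $f_{0,\mathbf 1}(1;t)=\tfrac1{|W|}\sum_w\det(w)\,t^{d(w)}=\tfrac1{|W|}\prod_i(t-m_i)=f_\varnothing(t)$; for $J=\Pi$, $\flag_e$ is a point and $f_{e,\mathbf 1}(1;t)=1=f_\Pi(t)$.

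The main obstacle is the second displayed identity $\Ind_{W_J}^W(\complex)=\sum_\phi[Q_{e,\phi}]$ for $e$ regular in $\levi_J$, together with the accompanying bookkeeping: one must control the ungraded total Springer module $H^*(\flag_e)$ and its $A(e)$-action for such orbits, using the structural facts that $A(e)$ is elementary abelian in this case and a precise accounting of which local systems enter the Springer correspondence; and one must verify that $\{[Q_{e,\phi}]\}$ stays a basis after the specialization $q=1$ (the triangularity is automatic, the nondegeneracy of the specialized diagonal blocks is where some care is needed). An alternative, purely computational route bypasses the geometric input: specialize the closed formula~\ref{formula:f} directly at $q=1$, balancing the order-$(s-1)$ zero of $\prod_{j=1}^{s-1}(q^{t-m_j}-1)$ against the poles at $q=1$ of the twisted centralizer sums $\sum_c \wedge^{d-i}\!\pi_s(c)\,\phi(c)/|Z_{G^F}(e_c)|$; there the difficulty migrates to determining the orders of vanishing at $q=1$ of $|Z_{G^F}(e_c)|$ and of those twisted sums, and it is exactly the regular-in-a-Levi hypothesis — equivalently $\dim Z(\levi_J)^\circ = n-|J| = s$ — that forces the surviving leading term to equal $f_J(t)$ while making everything vanish in the non-regular case.
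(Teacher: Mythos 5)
Your proposal is essentially the paper's own proof: specialize Equation~\ref{Main_equation_3} at $q=1$, use Proposition~\ref{properties}(3) to identify the left side with $S_t$, compare against the decomposition in Equation~\ref{decomp}, and invoke the fact (from the Borho--MacPherson triangularity, which persists at $q=1$ because the diagonal entries are pure powers of $q$) that the $Q_{e,\phi}\big|_{q=1}$ form a basis.

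The one place you stop short is the ``main obstacle'' you flag, namely the identity $\Ind_{W_J}^W(\complex)=\sum_\phi[Q_{e,\phi}]$ for $e$ regular in $\levi_J$. This is not really an obstacle: it follows immediately from Lusztig's theorem on induction of Springer representations \cite{lusztig:induction}, which gives $H^*(\flag_e)\simeq\Ind_{W_J}^W H^*(\flag^J_e)$ and hence $H^*(\flag_e)\simeq\Ind_{W_J}^W(\complex)$ when $e$ is regular in $\levi_J$. Combined with the definitional identity $H^*(\flag_e)=\sum_\phi(\dim\phi)\,Q_{e,\phi}$ in $R(W)$, this gives $f_{e,\phi}(1;t)=(\dim\phi)\,f_J(t)$, and the factor $\dim\phi$ — which you omit in your displayed identity — is then absorbed precisely because $A(e)$ is elementary abelian for $e$ regular in a Levi, so every $\phi$ is one-dimensional. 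You do mention both the elementary-abelian fact and the need to control the $A(e)$-action, so this is more a matter of wiring the pieces together than a missing idea. Your suggested alternative via a direct $q\to1$ limit of Equation~\ref{formula:f} is not the route taken here for Theorem~\ref{q to 1} (the paper uses it only in the proof of Corollary~\ref{cor:q to 1}), and the vanishing in the non-regular case would be harder to see that way; the representation-theoretic comparison is cleaner and is what the paper does.
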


\begin{proof}
Since the $f_{e, \phi}(q; t)$ are polynomial in $q$, we can set $q=1$ on both sides 
in Equation \ref{Main_equation_3} and obtain a decomposition:
$$S_t = \sum_{e}  \sum_{\phi} f_{e, \phi}(1;t) Q_{e, \phi}\big|_{q=1}$$
where we have used that 
$\H \simeq S_t$ as ungraded representations
by Proposition \ref{properties}.

Next we need Lusztig's result about induction of Springer representations \cite{lusztig:induction}:
if $e \in \levi_J$, then 
$H^*(\flag_e) \simeq \Ind_{W_J}^W H^*(\flag^J_e)$ as $W$-representations,
where $H^*(\flag^J_e)$ is the Springer representation of $W_J$ for $e$ with respect to $\levi_J$.
In particular if $e$ is regular in $\levi_J$, then $H^*(\flag^J_e)$ is the trivial representation of $W_J$ and thus
\begin{equation} \label{gyuri_induct}
H^*(\flag_e) \simeq \Ind_{W_J}^W( \complex).
\end{equation}
Recall that by definition $\sum_{\phi \in \ar}  (\dim \phi) Q_{e, \phi} = H^*(\flag_e)$ as  a $W$-representation.
Therefore the decomposition of $S_t$ from Equation \ref{decomp} is also a decomposition of 
$S_t$ into a sum of ungraded $Q_{e,\phi}$'s, with each induced factor corresponding to $H^*(\flag_e)$ for $e$ regular in $\levi_J$. 
That is,
$$S_t = \sum_{e}  \sum_{\phi} f_J(t)(\dim \phi) Q_{e, \phi}\big|_{q=1},$$
where the outer sum runs over a set of representatives $e$ of the nilpotent orbits for which $e$ is regular 
in some $\levi_J$.

Now the $Q_{e, \phi}$'s form a $\qq(q)$-basis of $R(W)$. 
Indeed this is a consequence of the previously used results of Borho-MacPherson \cite{borho-mac}:  namely, the transition matrix $\langle Q_{e,\phi}, \chi_{e', \phi'} \rangle$ is upper triangular for any total ordering respecting the partial order on nilpotent orbits.  
In fact the transition matrix takes values in $\zz[q]$ and the diagonal entries are powers of $q$,
so by setting $q=1$ in the transition matrix, it follows that the 
$$Q_{e, \phi}\big|_{q=1}$$ form a $\zz$-basis for the representation ring of $W$ with coefficients in $\zz$.  

Therefore the two decompositions of $S_t$ coincide.  This means that 
$f_{e, \phi}(1;t) =0$ if $e$ is not regular in a Levi subalgebra. 
On the other hand, if $e$ is regular in $\levi_J$, 
then $$f_{e, \phi}(1;t) =  (\dim \phi) f_J(t)$$ for any $\phi \in \ar$.  
Noting that $A(e)$ is elementary abelian when $e$ is regular in a Levi subalgebra (and thus $\dim \phi =1$) concludes the proof.
\end{proof}

\begin{corollary} \label{cor:q to 1}
Let $e$ be regular in a Levi subalgebra $\levi_J$.
Then the characteristic polynomial of the restricted arrangement $\mathcal{A}^J$ in $V^{W_J}$ factors as 
$$\chi_J(x)  = \prod_{j=1}^{s} (x - m_j)$$
where $m_1, m_2, \dots, m_s$ are the graded occurrences of $V$ in $H^*(\flag_e)$.
\end{corollary}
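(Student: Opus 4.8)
The plan is to derive the corollary directly from Theorem~\ref{q to 1}, feeding in the explicit shape of $f_{e,\phi}(q;t)$ recorded in~\ref{formula:f}. Since $f_J(x)=\chi_J(x)/|W^J|$ by definition, Theorem~\ref{q to 1} gives, for $e$ regular in $\levi_J$ and any $\phi\in\ar$ occurring in the Springer correspondence, the identity $\chi_J(t)=|W^J|\,f_{e,\phi}(1;t)$ of polynomials in $t$. By~\ref{gyuri_induct} we have $H^*(\flag_e)\simeq\Ind_{W_J}^W(\complex)$, so the total multiplicity of $V$ in $H^*(\flag_e)$ equals $\dim V^{W_J}=n-|J|$; since $A(e)$ is elementary abelian for $e$ regular in a Levi (so every $\pi_j$ is one-dimensional and $d=1$, as $e$ is neither $F_4(a_3)$ nor $E_8(a_7)$), this forces $s=n-|J|$. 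On the arrangement side $\mathcal{A}^J$ lies in the $(n-|J|)$-dimensional space $V^{W_J}$, so $\chi_J(x)$ is monic of degree $n-|J|=s$, matching $\deg\prod_{j=1}^s(x-m_j)$. Hence it suffices to show that $f_{e,\phi}(1;t)=c\prod_{j=1}^s(t-m_j)$ for some constant $c$: comparing leading coefficients in the resulting identity $\chi_J(t)=|W^J|\,c\prod_{j=1}^s(t-m_j)$, both sides monic of degree $s$, then forces $c|W^J|=1$ and gives $\chi_J(x)=\prod_{j=1}^s(x-m_j)$.

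To compute $f_{e,\phi}(1;t)$ I would take the limit $q\to1$ in~\ref{formula:f}, which with $d=1$ and $\tilde s=s$ reads
\[
f_{e,\phi}=q^{\,t(n-s)+m}\prod_{j=1}^{s-1}\bigl(q^{\,t-m_j}-1\bigr)\,B_\phi(q,t),\qquad
B_\phi(q,t)=-\sum_c\frac{\pi_s(c)\phi(c)}{|Z_{G^F}(e_c)|}+q^{\,t-m_s}\sum_c\frac{\phi(c)}{|Z_{G^F}(e_c)|}.
\]
Each factor $q^{t-m_j}-1$ vanishes to first order at $q=1$ with $\lim_{q\to1}(q^{t-m_j}-1)/(q-1)=t-m_j$, so the product behaves like $(q-1)^{s-1}\prod_{j=1}^{s-1}(t-m_j)$. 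The factor $B_\phi$ is a rational function of $q$ with a pole at $q=1$ coming from the torus part of $Z_G(e)^0$; by Proposition~\ref{poly} the product $f_{e,\phi}$ is a polynomial in $q$, so this pole has order at most $s-1$, and since $\chi_J$ has degree exactly $s$ in $t$ the pole order is exactly $s-1$ and the leading Laurent coefficient $L(t):=\lim_{q\to1}(q-1)^{s-1}B_\phi(q,t)$ is a nonzero linear polynomial in $t$. One then gets $f_{e,\phi}(1;t)=\prod_{j=1}^{s-1}(t-m_j)\,L(t)$, and the remaining task is to identify the root of $L$ as $m_s$, after which the first paragraph concludes the argument.

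The step I expect to be the real obstacle is precisely showing that $L(t)$ is proportional to $t-m_s$ and not to $t-m_s'$ for some other positive integer $m_s'$. Writing $B_\phi=A_0(q)\bigl(q^{t-m_s}-A_1(q)/A_0(q)\bigr)$ with $A_0=\sum_c\phi(c)/|Z_{G^F}(e_c)|$ and $A_1=\sum_c\pi_s(c)\phi(c)/|Z_{G^F}(e_c)|$, the ``root'' in the variable $q^{t}$ sits at $q^{m_s}A_1(q)/A_0(q)$, so what is needed is that $A_1/A_0\to1$ as $q\to1$ \emph{with vanishing first derivative there}; this is a statement about how the orders of the twisted centralizers $|Z_{G^F}(e_c)|$ degenerate at $q=1$, i.e. about the $A(e)$-action on the reductive part of $Z_G(e)^0$. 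When $\pi_s$ is trivial this is immediate, since then $A_1=A_0$ and $B_\phi=A_0(q^{t-m_s}-1)$; the delicate case is a nontrivial $\pi_s$, which does occur in type $D_n$ (Proposition~\ref{me:pij}), and there I would analyze the degeneration of $|Z_{G^F}(e_c)|$ case by case. Alternatively one can sidestep this point: in the classical types the equality $\chi_J(x)=\prod_j(x-m_j)$ is already known from \cite{lehrer-shoji:reflections}, \cite{spaltenstein:reflection}, and in the exceptional types it can be checked against the tables of \cite{bs:green}, with Theorem~\ref{q to 1} then supplying the conceptual explanation promised in the introduction.
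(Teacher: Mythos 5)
Your overall approach matches the paper's: invoke Theorem~\ref{q to 1} together with $f_J(t)=\chi_J(t)/|W^J|$, establish $s=n-|J|$ via \ref{gyuri_induct} and Frobenius reciprocity, reduce to showing $f_{e,\phi}(1;t)$ is a constant multiple of $\prod_{j=1}^s(t-m_j)$, and compute the $q\to1$ limit from the explicit formula \ref{formula:f}. The degree and leading-coefficient bookkeeping in your first paragraph is all correct, and your pole-order argument via Proposition~\ref{poly} to pin down $L(t)$ as a nonzero linear polynomial is a legitimate variant of the paper's bookkeeping.

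The gap is exactly where you flag it, and it is not a footnote but the crux of the argument. You correctly reduce to showing that the root of $L(t)$ is $m_s$, equivalently that $A_1(q)/A_0(q)=1+O((q-1)^2)$, but then stop short, offering a case-by-case verification or a retreat to the already-known values of $\chi_J$. The paper closes this gap with a concrete structural fact that should replace your ``obstacle'' discussion: since $e$ is split and $\rank\,Z_G(e)=s$, the Chevalley--Steinberg formula forces $(q-1)^s$ to divide $|Z_{G^F}(e)|$ exactly, while for every nontrivial $c\in A(e)$ the highest power of $(q-1)$ dividing $|Z_{G^F}(e_c)|$ is at most $s-1$, and is at most $s-2$ in precisely the cases where $\pi_s$ is nontrivial. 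Multiplying the sum in \ref{one_d} by the $(q-1)^{s-1}$ supplied by $\prod_{j=1}^{s-1}(q^{t-m_j}-1)$, every $c\neq1$ term then tends to $0$ and only $c=1$ survives, giving $f_{e,\phi}(1;t)=C\prod_{j=1}^s(t-m_j)$ directly. In your $A_0$, $A_1$ notation this is exactly the statement that $A_1-A_0$ has a pole of order at most $s-2$ at $q=1$, hence $A_1/A_0-1=O((q-1)^2)$; but the paper's phrasing in terms of the $(q-1)$-order of $|Z_{G^F}(e_c)|$ turns it into a checkable property of the reductive centralizer rather than an unexplained cancellation, and without something of this form your proposal does not yet prove the corollary.
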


\begin{proof}
By the theorem and the fact that $$f_J(t) = \frac{1}{|W^J|} \chi_J(t),$$ it will be enough to show that 
$f_{e, \phi}(1;t) = C \prod_{j=1}^{s} (t - m_j)$ where $C$ is some constant since $\chi_J(x)$ is monic by definition.

In the previous two proofs we did not use the explicit formula for $f_{e, \phi}(q;t)$ coming from Theorem \ref{weaker_theorem}, but we use it here.
Since $\pi_s$ is one-dimensional, Equation \ref{formula:f} becomes
\begin{equation} \label{one_d}
f_{e, \phi} = q^{t(n - s)+ m} \prod_{j=1}^{s-1} (q^{t-m_j} - 1)
 \sum_c  \frac{ (q^{t-m_s} - \pi_s(c)) \phi(c)} {|Z_{G^F}(e_c)|}.
 \end{equation}

By Equation \ref{gyuri_induct} and Frobenius reciprocity, the multiplicity $s$ of $V$  in $H^*(\flag_e)$
equals $n - |J|$ and therefore $s$ is both the degree of $\chi_J(x)$ and the rank of $Z_G(e)$.

Recall the assumption from Section \ref{section:shoji} that $e$ is rational and split.
Since the rank of $Z_G(e)$ equals $s$, 
the Chevalley-Steinberg formula for  $|Z_{G^F}(e)|$ implies that
$(q-1)^s$ divides $|Z_{G^F}(e)|$ and no higher power of $(q-1)$ divides.  
When $e$ is regular in $\levi_J$, we observe that for $c \in A(e)$ nontrivial, 
the highest  possible power of $(q-1)$ dividing $|Z_{G^F}(e_c)|$ is $s-1$; moreover, 
in the cases where $\pi_s$ is nontrivial, 
the highest possible power of $(q-1)$ dividing $|Z_{G^F}(e_c)|$ is only $s-2$ when $c$ is nontrivial.

Taking the limit $q \to 1$ in Equation \ref{one_d}, the only term that survives in the sum
is for $c=1$ and we get
$f_{e, \phi}(1;t) = C \prod_{j=1}^{s} (t - m_j)$ for some constant $C$, which establishes the result.  
\end{proof}

\begin{remark}
Certainly $C=\frac{1}{|W^J|}$.  From the proof of the corollary it therefore follows that
$$\lim_{q \to 1} \frac{(q-1)^s}{|Z_{G^F}(e)|} = \frac{1}{|W^J|}.$$  This could also be deduced using \cite{brundan-goodwin:poly} where it
is shown that the Weyl group of $Z_{G}(e)$ is isomorphic to $W^J$.
\end{remark}

\begin{remark}
Theorem \ref{q to 1} shows that when $e$ is regular in $\levi_J$ that 
$f_{e, \phi}(q;t)$ is a $q$-analog of 
$$f_J(t) = \frac{1}{|W^J|} \chi_J(t).$$  The  computation of $f_{e, \phi}(q;t)$  will be the subject of a sequel paper with Reiner.  The paper \cite{bessis-reiner:cyclic} was a motivation for the present paper, especially an earlier version which asked whether such $q$-analogs existed.
\end{remark}

\begin{remark}
Corollary \ref{cor:q to 1} gives another proof that the characteristic polynomials $\chi_J(x)$ factor.
One can wonder, 
since Conjecture \ref{conj1} implies Theorem \ref{weaker_theorem} which in turn implies that $\chi_J(x)$ factors, 
whether the conjecture also has something to do with 
the fact that the restricted arrangement $\mathcal{A}^J$ is free.
\end{remark}

\begin{remark}
The representations $\H$ are related to the work of Haiman \cite{haiman:cdm} on the diagonal harmonics and various generalizations.  
By \cite{gordon:diagonal},  \cite{beg},  \cite{gordon-stafford}, 
$\H \otimes \epsilon$ where $\epsilon$ is the sign representation is isomorphic to a natural quotient of the polynomials on two copies of $V$, after 
replacing the second grading variable by $q^{-1}$
and shifting by an appropriate power of $q$.  Of particular interest is the case
when $t=h+1$ since this connects with the 
diagonal harmonics.    See the above mentioned papers for a more careful description.
\end{remark}

\begin{remark}
The decomposition of $\H$ into a sum of $Q_{e,\phi}$'s in Equation \ref{Main_equation_3}
should be compared to the decomposition of the total homology of certain fixed-point varieties in the affine flag manifold 
in \cite{sommers:affineweylgp}.  A connection along these line was already noticed in \cite{beg}.
In the situation of \cite{sommers:affineweylgp}, however, the representation was only determined as an ungraded representation.  
The present work suggests that the graded representation there should agree with the the other 
natural singly-graded representation obtained from the functions on two copies of V (see \cite{haiman:cdm}).  In particular, in type $A_n$ the affine Springer representations of \cite{sommers:affineweylgp} should be equal to the (graded) parking function module for $t=h+1$ and its generalizations for other $t$.
Evidence for this conjecture is provided by the fact that the combinatorics of the extended Shi arrangement plays a parallel role in both the homology of the affine Springer fiber and the parking function module.
\end{remark}

\bibliography{exterior}
\bibliographystyle{pnaplain}

\end{document}